\documentclass{amsart}
\usepackage{stmaryrd}
\usepackage{graphicx}
\usepackage{bxtexlogo}
\usepackage{amsmath, amssymb}
\usepackage{amsthm}
\usepackage{type1cm}
\usepackage{enumitem}
\usepackage{ascmac}
\usepackage{bm}
\usepackage{time}
\usepackage{here}
\usepackage{mathrsfs}
\usepackage[all]{xy}
\usepackage{tikz}

\title{Kontsevich graphs associated with specific multiple zeta values}
\author{Kota Baba}
\address{Graduate School of Mathematics, Nagoya University, Chikusa-ku, Furo-cho, Nagoya, 464-8602,  Japan}
\email{baba.kota.d6@s.mail.nagoya-u.ac.jp}
\date{June 24, 2026}

\DeclareFontFamily{U}{wncy}{}
\DeclareFontShape{U}{wncy}{m}{n}{<->wncyr10}{}
\DeclareSymbolFont{cyrillic}{U}{wncy}{m}{n}
\DeclareMathSymbol{\shafont}{\mathbin}{cyrillic}{"58} 

\newcommand{\shuffle}{\mathbin{\text{\scalebox{0.8}{$\shafont$}}}}
\usetikzlibrary{decorations.markings,arrows,calc}
\usetikzlibrary{arrows.meta}

\theoremstyle{definition}
\newtheorem{thm}{Theorem}[section]
\newtheorem*{thm*}{Theorem}

\theoremstyle{definition}
\newtheorem{dfn}[thm]{Definition}
\newtheorem*{dfn*}{Definition}
\newtheorem{ex}[thm]{Example}
\newtheorem{prop}[thm]{Proposition}
\newtheorem{lem}[thm]{Lemma}
\newtheorem{cor}[thm]{Corollary}
\newtheorem{rem}[thm]{Remark}

\numberwithin{equation}{section}

\newcommand{\ZZ}{\mathbb{Z}}
\newcommand{\NN}{\mathbb{N}}
\newcommand{\QQ}{\mathbb{Q}}

\newcommand{\sgn}{\mathrm{sgn}}

\tikzset{
	pics/torus/.style n args={3}{
		code = {
			\providecolor{pgffillcolor}{rgb}{1,1,1}
			\begin{scope}[
				yscale=cos(#3),
				outer torus/.style = {draw,line width/.expanded={\the\dimexpr2\pgflinewidth+#2*2},line join=round},
				inner torus/.style = {draw=pgffillcolor,line width={#2*2}}
				]
				\draw[outer torus] circle(#1);\draw[inner torus] circle(#1);
				\draw[outer torus] (180:#1) arc (180:360:#1);\draw[inner torus,line cap=round] (180:#1) arc (180:360:#1);
			\end{scope}
		}
	}
}

\begin{document}
\maketitle
        \begin{abstract}
The primary aim of this paper is to provide an explicit construction of Kontsevich graphs whose integrals give certain multiple zeta values. Furthermore, by using this construction, we explicitly determine the weights of $\ZZ$-linear combinations of graphs whose integrals yield normalized multiple zeta values of a given weight. The methods employed in this paper are based on those introduced by Ritland, and our results provide a refinement of his results.
        \end{abstract}
        \tableofcontents
        \section{Introduction}
       \emph{Kontsevich graphs} and their integrals were introduced in \cite{Kon03} in the context of deformation quantization of Poisson manifolds. The integrals associated with Kontsevich graphs are defined by assigning differential forms, called \emph{propagators}, to the edges of the graphs. Various types of propagators have been discovered, and the values of the resulting integrals depend on the choice of propagator. In this paper, we consider the \emph{logarithmic propagator} . The fact that this propagator gives rise to a deformation quantization was first mentioned by Kontsevich~\cite[Section~4.1]{Kon99} and was rigorously proved in~\cite{ARTW16}.
In~\cite{BPP}, Banks, Panzer, and Pym showed that logarithmic Kontsevich integrals can be expressed as $\mathbb{Z}$-linear combinations of {\em normalized multiple zeta values}. Here, a \emph{multiple zeta value} of weight $n$ is the real number defined, for $\mathbf{k}=(k_1,\dots,k_r)\in\NN^{r}$ with $\sum_{i}k_i=n$ and $k_r>1$, by the convergent series
\begin{equation}\label{tajuu}
  \zeta(\mathbf{k})
:= \sum_{0<n_1<\cdots<n_r}
\frac{1}{n_1^{k_1}\cdots n_r^{k_r}},
\end{equation}

and its normalization is given by
\begin{equation}\label{seikika}
  \widetilde\zeta(\mathbf{k})
:= \frac{1}{(2\pi i)^n}\,\zeta(\mathbf{k}).
\end{equation}

Conversely, it was conjectured in~\cite{BPP} that any normalized multiple zeta value of weight $n$ can be expressed as a $\mathbb{Z}$-linear combination of logarithmic Kontsevich integrals of the same weight $n$. The corresponding statement over $\QQ$ was proved in \cite{Rit}.
In this paper, we prove the following theorem.

\begin{thm}\label{rafunathm}
\emph{For each integer $k \ge 2$, define a map $H_k:\mathcal{K}_{\ZZ}\to\mathcal{K}_{\ZZ}$ by
\[
H_{k}(\Gamma)
:=G(e) * \mu^k(\Gamma)
-2\,\mu\bigl(G(e) * \mu^{k-1}(\Gamma)\bigr)
+\mu^{2}(G(e) * \mu^{k-2}(\Gamma)).
\]
For $k_1,\dots,k_r\ge2$, set
\[
H_{(k_1,\dots,k_r)}
:= H_{k_r} \circ \cdots \circ H_{k_1}.
\]
Then one has
\[
c\bigl(H_{(k_1,\dots,k_r)}(G(e))\bigr)
= (-1)^{r+1}\widetilde\zeta(k_1+2,\dots,k_r+2,2).
\]}
\end{thm}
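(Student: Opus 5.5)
The plan is to follow Ritland's framework: Kontsevich integrals of the graphs in $\mathcal{K}_{\ZZ}$ are read as iterated integrals, i.e.\ as \emph{generating functions} for words in the letters $x_0,x_1$. I assume the following facts from the preliminaries (as in \cite{Rit}). First, $c$ is $\ZZ$-linear. Second, to each graph $\Gamma$ one attaches a polylogarithm-type function $f_\Gamma(z)$ of the position of a distinguished vertex, with $c(\Gamma)$ the boundary value of $f_\Gamma$; in each formula below, all terms are taken at the same distinguished vertex. Third, the operation $\mu$ corresponds to integrating once against the form $\frac{dz}{2\pi i\, z}$, that is, to prepending the letter $x_0$ in the normalized shuffle-algebra picture. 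Fourth, gluing $G(e)*{-}$ corresponds to integrating against a form of type $\frac{dz}{2\pi i\,(1-z)}$, up to correction terms, i.e.\ prepending $x_1$ modulo terms involving $\mu$. I would first state these as a lemma, with precise signs, giving $f_{\mu(\Gamma)}$ and $f_{G(e)*\Gamma}$ in terms of $f_\Gamma$.

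With such a lemma, the second step is to compute the word attached to $H_k(\Gamma)$. The combination $A-2\mu(A')+\mu^2(A'')$ has the shape of a second finite difference $(1-\mu)^2$ applied to the family $G(e)*\mu^{j}(\Gamma)$. I expect its purpose is to cancel exactly the correction terms produced by the gluing $G(e)*{-}$. The target identity is
\[
f_{H_k(\Gamma)} = -\,\mathcal{I}\bigl(x_0^{k}x_1\bigr)\cdot f_\Gamma ,
\]
where the right-hand side denotes the iterated integral that prepends the block $x_0^{k+1}x_1$, or $x_0^{k}x_1$ followed by the factor already present; I would fix the exact normalization by computing $H_2(G(e))$ directly. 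The sign $-1$ per application is what will produce $(-1)^{r+1}$.

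The third step is induction on $r$. The base case is $r=0$. For this I check from the definition of the logarithmic propagator that $G(e)$ corresponds to the word $x_0x_1$ up to sign, so that $c(G(e))$ relates to $\widetilde\zeta(2)$; I then fix the overall sign convention so that $r=0$ gives $-\widetilde\zeta(2)$. For the inductive step, each application of $H_{k_i}$ prepends $x_0^{k_i+1}x_1$ to the word and multiplies by $-1$. The word $x_0^{k_r+1}x_1\cdots x_0^{k_1+1}x_1\,x_0x_1$ is then the iterated-integral encoding of $\zeta(k_1+2,\dots,k_r+2,2)$ in the paper's ordering convention $n_1<\dots<n_r$. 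The factors $2\pi i$ combine correctly because each letter carries one factor $1/(2\pi i)$, which gives the normalization $\widetilde\zeta$. Finally, taking boundary values via $c$ gives the claim.

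The main obstacle is the second step: showing that the three-term combination in $H_k$ kills all the boundary and correction contributions from gluing the edge graph $G(e)$, leaving a pure prepended word. I would handle it by writing $f_{G(e)*\mu^j\Gamma}=\mathcal{I}(x_1 x_0^j)f_\Gamma+R_j$. The aim is to show that $R_j$ depends on $j$ affinely, specifically through a term $x_0^{j}$ times a fixed correction, so that the second difference $R_k-2\mu R_{k-1}+\mu^2R_{k-2}$ vanishes. If affineness fails, I would instead compute $R_j$ explicitly via the Stokes/residue formula for the logarithmic propagator at the collision of the new edge with the boundary.
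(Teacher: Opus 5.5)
Your overall plan is the paper's: translate graphs into words and compute there. The gap is that the dictionary you propose for the join is wrong, and that is exactly where the content of the proof lies. In the paper's framework, $G$ and $I$ are both $\ZZ$-algebra and $\mathcal{F}$-module homomorphisms, so they intertwine the operators $H_k$. Proposition~\ref{com} then gives $c\circ G=L^{10}\circ I$. Hence the join $*$ corresponds to the shuffle product, and $G(e)$ corresponds to the two-letter word $I(e)=e_1e_0$. So $G(e)*(-)$ acts as $w\mapsto e_1e_0\shuffle w$ and adds two letters; it is not ``prepend $x_1$ up to $\mu$-corrections.'' This has three consequences:
\begin{itemize}
\item Your Step~1 lemma is false as formulated.
\item As written, your ansatz $f_{G(e)*\mu^j\Gamma}=\mathcal{I}(x_1x_0^j)f_\Gamma+R_j$ does not even have the right weight: it supplies $j+1$ new letters where $G(e)*\mu^j$ adds $j+2$.
\item The mechanism you hope for in Step~2 cannot work. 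If the second difference of $R_j$ vanished, you would be left with the second difference of the main terms, which is a three-term combination, not a single word.
\end{itemize}
What is actually needed is the exact identity of Lemma~\ref{H}: for every nonempty word $v$,
\[
e_1e_0\shuffle ve_0e_0-2\,(e_1e_0\shuffle ve_0)\,e_0+(e_1e_0\shuffle v)\,e_0e_0=v\,e_0e_0e_1e_0 .
\]
It follows by applying the recursion $w'a\shuffle v'b=(w'\shuffle v'b)\,a+(w'a\shuffle v')\,b$ twice, and it gives $H_k(v)=\mu^k(v)\,e_1e_0$ (Proposition~\ref{Hk}). Your finite-difference intuition does survive in this corrected setting. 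In $e_1e_0\shuffle ue_0^{j}$, the coefficients that arise from shuffling $e_1$ and $e_0$ into the trailing $e_0$-block are affine in the length of that block. The second difference therefore cancels everything except the single term with $e_1e_0$ at the very end. No Stokes or residue computation is needed: once Proposition~\ref{com} is cited, the proof is purely combinatorial.

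Your final bookkeeping is also wrong, beyond a normalization issue; computing $H_2(G(e))$ as you suggest would expose it. In the paper's convention $\mu(w)=we_0$, the operator $H_k$ appends $e_0^{k}e_1e_0$. It turns $e_1e_0$ into $e_1e_0^{k_1+1}e_1e_0$, and after $r$ steps into $e_1e_0^{k_1+1}\cdots e_1e_0^{k_r+1}e_1e_0$. The image of this word under $L^{10}$ is $(-1)^{r+1}\widetilde\zeta(k_1+2,\dots,k_r+2,2)$, with the entry $2$ on the largest summation index. In your reversed ``prepend'' convention, this means $H_k$ prepends $x_0x_1x_0^{k}$, not $x_0^{k+1}x_1$. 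The correct final word is $x_0x_1x_0^{k_r+1}x_1\cdots x_0^{k_1+1}x_1$. The word you wrote, $x_0^{k_r+1}x_1\cdots x_0^{k_1+1}x_1x_0x_1$, is the reversal of $e_1e_0\,e_1e_0^{k_1+1}\cdots e_1e_0^{k_r+1}$. It encodes $\widetilde\zeta(2,k_1+2,\dots,k_r+2)$, which is a different number; already for $r=1$, $k_1=2$ one has $\zeta(4,2)\neq\zeta(2,4)$.
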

Here, $\mathcal{K}_{\ZZ}$ denotes the $\ZZ$-linear space spanned by ordered Kontsevich graphs, and $c$ is the linear map given by integration. The graph $G(e)$ is the following Kontsevich graph:
\[
G(e)=
\begin{tikzpicture}[baseline=(current bounding box.center)]
\begin{scope}[decoration={
markings,
mark=at position 1 with {\arrow{>}}}
] 
\node (U) at (0,-1) {$x$};
\node (V) at (1,-1) {$y$};
\node (A) at (0,-2) {$A$};
\node (B) at (1,-2) {$B$};
\draw[postaction=decorate] (U) to[bend left] (V);
\draw[postaction=decorate] (V) to[bend left] (U);
\draw[postaction=decorate] (U) -- (A);
\draw[postaction=decorate] (V) -- (B);
\end{scope}
\end{tikzpicture}.
\]
The symbols $*$ and $\mu$ denote the product and the operator on $\mathcal{K}_{\ZZ}$ introduced in \cite{Rit}, respectively; their precise definitions will be given in Section 4. 
Theorem \ref{rafunathm} provides an explicit construction of $\ZZ$-linear combinations of Kontsevich graphs whose integrals yield certain multiple zeta values.
A precise statement and proof of this theorem will be given in Section~5.

Furthermore, combining this construction with explicit formulas for multiple zeta values, we obtain the following theorem.

\begin{thm}
For $m \ge 2$, define
\begin{equation*}
N(m):=
\begin{cases}
\left\lceil \dfrac{m^2+m-8}{16} \right\rceil & \text{if $m \equiv 0,3 \pmod{4}$,} \\[6pt]
\left\lceil \dfrac{m^2+m-10}{16} \right\rceil & \text{if $m \equiv 1,2 \pmod{4}$.}
\end{cases}
\end{equation*}
Then
\[
\widetilde{\mathcal{Z}}_{\ZZ}^{\flat,m}:=\widetilde{\mathcal{Z}}_{\ZZ}^m+\frac{1}{2}\widetilde{\mathcal{Z}}_{\ZZ}^{m-1} \subset c\bigl(\mathcal{K}_{\ZZ}^{m+4N(m)}\bigr)\subset\widetilde{\mathcal{Z}}_{\ZZ}^{\flat,m+4N(m)}
\]
holds.
\end{thm}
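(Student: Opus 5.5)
The second inclusion should follow directly from the Banks--Panzer--Pym result recalled in the introduction. In its refined form, it says that the integral of a Kontsevich graph of weight $n$ lies in the $\ZZ$-span of normalized multiple zeta values of weight $n$, together with half-integer multiples of those of weight $n-1$. So I would quote that result (in whatever form it is stated in the preliminaries) for $n=m+4N(m)$ and obtain $c(\mathcal{K}_{\ZZ}^{m+4N(m)})\subset\widetilde{\mathcal{Z}}_{\ZZ}^{\flat,m+4N(m)}$ with no further work. All the effort goes into the first inclusion.

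For the first inclusion I would work in two layers.

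\emph{First layer: realized values.} By Theorem~\ref{rafunathm}, for every index of the shape $(k_1+2,\dots,k_r+2,2)$ with all $k_i\ge 2$, the value $\widetilde\zeta(k_1+2,\dots,k_r+2,2)$ is, up to sign, the integral of an explicit $\ZZ$-combination of graphs. I would first check that these graphs have weight equal to $\sum_i(k_i+2)+2$. Next I would use the multiplicativity $c(\Gamma*\Gamma')=c(\Gamma)c(\Gamma')$ of the product $*$ from Section~4, together with the effect of $\mu$ on integrals. The goal is to show that the set of realized values, namely integrals of elements of $\mathcal{K}_{\ZZ}^{n}$, is closed under multiplication. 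I would also show that it contains a few basic small-weight values: $G(e)$ itself and the $\widetilde\zeta(2,\dots)$-type values that arise from $H_k$ applied to short indices.

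\emph{Second layer: reduction of an arbitrary value.} Next I need an explicit identity that rewrites an arbitrary $\widetilde\zeta(\mathbf{k})$ of weight $m$, or one half of one of weight $m-1$, as a $\ZZ$-combination of products of realized values, all of total weight $m+4N(m)$. The entries $1$ and $2$ of $\mathbf{k}$ are exactly what prevents $\mathbf{k}$ from having the shape in Theorem~\ref{rafunathm}. My first attempt would be to multiply by (or shuffle with) a fixed weight-$4$ element built from $G(e)$ and $\mu$. I would aim to show that each such multiplication, after a $\ZZ$-integral shuffle/stuffle/duality identity, removes one ``defect'' from the index (one entry equal to $1$ or $2$ in a non-final position), at the cost of raising the weight by $4$ and producing correction terms of strictly fewer defects. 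The factor $\frac12$ in front of $\widetilde{\mathcal{Z}}_{\ZZ}^{m-1}$ should be absorbed in the same way: the weight-$(m-1)$ values enter through the denominator $2$ produced by regularization of the depth-one or $\zeta(1)$-type boundary terms.

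The main obstacle is the bookkeeping that yields exactly $N(m)$. One has to bound, over all indices of weight $\le m$, the maximal number of weight-$4$ steps needed for the induction to terminate, and show that the worst case is quadratic, roughly $\sim m^2/16$, with the stated dependence on $m \bmod 4$. I expect the count to come from an induction on a potential function such as $\sum(\text{positions of defects})$. The worst index would be one like $(1,\dots,1,2)$ or alternating $1$'s and $3$'s, whose potential is about $\binom{m/2}{2}/2$. I would first compute $N(m)$ for small $m$ by hand, checking that $N(2)=N(3)=0$ and $N(4)=1$, to calibrate the potential. Only then would I prove the general monotonicity. Finally, I need to check that realizing the class for weight $m$ in weight $m+4N(m)$ also realizes lower-weight classes in the same weight. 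The plan for this is to pad them with the same weight-$4$ element until their weight reaches $m+4N(m)$.
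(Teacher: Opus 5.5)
Your treatment of the second inclusion, quoting Banks--Panzer--Pym ($c_\Gamma\in\widetilde{\mathcal{Z}}_{\ZZ}^{\flat,n}$ for $\Gamma$ of weight $n$), matches the paper. The first inclusion has a genuine gap.

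\textbf{The defect-removal step.} It rests on an unspecified $\ZZ$-integral identity, and it aims at the wrong target. Multiplying by a realized weight-$4$ element does not change the index of $\widetilde\zeta(\mathbf{k})$; it only rescales the value. For instance, $c(\mu^2(G(e)))=L^{10}(e_1e_0^3)=-\widetilde\zeta(4)=-\frac{1}{1440}$. Expanding such a product by shuffles gives weight-$(m+4)$ words with no controlled decrease in the number of small entries. So nothing guarantees that your induction terminates, and your potential-function count is unrelated to the stated $N(m)$. Your calibration also fails: $N(3)=\lceil 4/16\rceil=1$, not $0$.

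\textbf{The half-integer part.} Attributing the $\frac12$ in $\frac12\widetilde{\mathcal{Z}}_{\ZZ}^{m-1}$ to regularization of $\zeta(1)$-type terms is not a mechanism. Padding by weight-$4$ elements cannot move a class from weight $(m-1)+4N(m-1)$ to weight $m+4N(m)$, because the difference $1+4(N(m)-N(m-1))$ is odd.

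\textbf{The missing idea.} Over $\ZZ$ the obstruction is a denominator, and the extra weight $4N(m)$ is spent cancelling it rather than rewriting indices.
\begin{itemize}
\item Tracking denominators in Ritland's Lyndon-word induction (Proposition~\ref{Lyshu} with Lemma~\ref{L3}) gives $\lfloor m(m+1)/4\rfloor!\cdot\omega\in I(\mathcal{T}_{\ZZ}^m)$ for every word $\omega$ of length $m$ (Proposition~\ref{nijoudeosaeru}). By Proposition~\ref{com}, $\lfloor m(m+1)/4\rfloor!\cdot\widetilde\zeta(\mathbf{k})\in c(\mathcal{K}_{\ZZ}^m)$.
\item Theorem~\ref{rafunathm}, via Corollary~\ref{mzge4}, places $\widetilde\zeta(\{4\}_n)$ in $c(\mathcal{K}_{\ZZ}^{4n})$. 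The evaluation $\widetilde\zeta(\{4\}_n)=1/(2^{2n-1}(4n+2)!)$ then gives $1/(4n+2)!\in c(\mathcal{K}_{\ZZ}^{4n})$.
\item Take the $*$-product of the two realizations and multiply by the integer $(4n+2)!/\lfloor m(m+1)/4\rfloor!$. This gives $\widetilde\zeta(\mathbf{k})\in c(\mathcal{K}_{\ZZ}^{m+4n})$ whenever $\lfloor m(m+1)/4\rfloor\le 4n+2$.
\end{itemize}
$N(m)$ is exactly the least such $n$. The quadratic growth and the dependence on $m \bmod 4$ both come from the formula for $\lfloor m(m+1)/4\rfloor$.

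For the half-integer part, the paper uses the weight-$1$ graph $\Gamma_w$, with one internal vertex and edges to $A$ and $B$, which satisfies $c(\Gamma_w)=-\frac12$. If $z=c(\Gamma)$ with $\Gamma\in\mathcal{K}_{\ZZ}^n$, then $\frac12 z=-c(\Gamma*\Gamma_w)\in c(\mathcal{K}_{\ZZ}^{n+1})$. In particular $c(\mathcal{K}_{\ZZ}^n)\subset c(\mathcal{K}_{\ZZ}^{n+1})$ (Lemma~\ref{1/2}). Combined with $N(m)\ge N(m-1)$, this gives $\frac12\widetilde{\mathcal{Z}}_{\ZZ}^{m-1}\subset c(\mathcal{K}_{\ZZ}^{m+4N(m)})$. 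The graph $\Gamma_w$ supplies both the factor $\frac12$ and the odd weight shift that your weight-$4$ padding cannot provide.
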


Here, ${\widetilde{\mathcal{Z}}_{\ZZ}^m}$ denotes the $\ZZ$-module spanned by (\ref{seikika}) of weight $m$.

The proofs of the main results of this paper are based on techniques developed in the prior work of Ritland~\cite{Rit}. Accordingly, in Section~4 we review his methods and results. Theorem~1.2 provides a refinement of Theorems~\ref{wg} and~\ref{ag} established there.

Section 2 is a review of the definition and basic properties of multiple zeta values. 
In Section 3, we review Kontsevich graphs and the definition of their associated integrals. 
Section 4 is devoted to the method introduced by Ritland, which serves as a fundamental tool in the proofs of the main results of this paper. 
Based on this method, we prove Theorem 1.1 in Section 5 and Theorem 1.2 in Section 6.

        \section{Multiple zeta values}
    In this section, we recall the algebraic structure of multiple zeta values. As mentioned in the introduction, $\zeta(\mathbf{k})$ is the real number defined by (\ref{tajuu}) for $\mathbf{k}=(k_1,\dots,k_r)$ satisfying $k_r>1$.
A necessary and sufficient condition for the series in the definition of multiple zeta values to converge is that $k_r>1$.

\begin{dfn}
  An element of $\mathbb{N}^r$ is called an {\em index}, and an index satisfying $k_r>1$ is called an {\em admissible index}.
  For an index $\mathbf{k}=(k_1,\dots,k_r)$, the integer $r$ is called the {\em depth} of $\mathbf{k}$, and the sum $k:=k_1+\cdots+k_r$ is called the {\em weight}.
\end{dfn}
We set
\[
\widetilde{\mathcal{Z}}_{\mathbb{Z}}:=\sum_{n\ge 0}\widetilde{\mathcal{Z}}_{\mathbb{Z}}^n \subset \mathbb{C},
\]
where we define $\widetilde{\mathcal{Z}}_{\mathbb{Z}}^0=\mathbb{Z}$ and $\widetilde{\mathcal{Z}}_{\mathbb{Z}}^1=\{0\}$. Moreover, we set
\[
\widetilde{\mathcal{Z}}_{\ZZ}^{\flat,n}
:= \widetilde{\mathcal{Z}}_{\mathbb{Z}}^n + \frac{1}{2}\widetilde{\mathcal{Z}}_{\mathbb{Z}}^{\,n-1},
\qquad
\widetilde{\mathcal{Z}}_{\ZZ}^{\flat}
:= \sum_{n\ge 0}\widetilde{\mathcal{Z}}_{\ZZ}^{\flat, n}
\]
 
	Multiple zeta values satisfy a family of relations known as the shuffle product relations, which can be proved using iterated integrals. \\ \quad \,\,For simplicity, we introduce the notation for iterated integrals as follows. Let $\{A_i\}$ be functions defined on the interval $(0,1)$. We define
\[
\int_0^1 A_1 \cdots A_n
:= \int_{0 \le t_1 \le \cdots \le t_n \le 1}
A_1(t_1)\cdots A_n(t_n)\, dt_1 \cdots dt_n .
\]

Multiple zeta values admit the following representation in terms of iterated integrals.
\begin{prop}
\em{Let $\omega_0(t)=1/t$ and $\omega_1(t)=1/(1-t)$. Then, for any admissible index $\mathbf{k}=(k_1,\dots,k_r)$, we have
\[
\zeta(\mathbf{k})
= \int_0^1
\omega_1 \omega_0^{k_1-1} \cdots \omega_1 \omega_0^{k_r-1}.
\]}
\end{prop}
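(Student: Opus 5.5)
This is the standard iterated-integral representation. The plan is to expand each factor $\omega_1(t)=1/(1-t)=\sum_{m\ge0}t^m$ as a geometric series and integrate from the innermost variable $t_1$ outward. Along the way we track the power of the current outermost variable, which produces the nested sum (\ref{tajuu}). Admissibility ($k_r>1$) guarantees convergence. Since every integrand is nonnegative on $(0,1)$, the monotone convergence theorem justifies all interchanges of sums and integrals.

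The key step is an inductive claim. For $1\le j\le r$, let $K_j=k_1+\cdots+k_j$. Then the partial iterated integral over the first $K_j$ variables, with the outer variable $t=t_{K_j}$ fixed, equals
\[
\int_{0\le t_1\le\cdots\le t_{K_j}=t}\omega_1\omega_0^{k_1-1}\cdots\omega_1\omega_0^{k_j-1}
=\sum_{0<n_1<\cdots<n_j}\frac{t^{n_j}}{n_1^{k_1}\cdots n_j^{k_j}} ,
\]
where the outermost form $\omega_0$ (or $\omega_1$ if $k_j=1$) at $t$ is included as a factor. Rather than fuss over conventions, we prove the cleaner version: integrating the forms against $dt$ up to but not including the last variable yields a power series in $t$. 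The claim then follows from two elementary operations. First, applying $\int_0^{s}\frac{dt}{t}(\cdot)$ sends $t^{n}$ to $s^{n}/n$; this raises the exponent of $n_j$ by one. Second, applying $\int_0^{s}\frac{dt}{1-t}(\cdot)$ sends $\sum_n a_n t^{n}/n'$-type series to a new layer: since $t^{n}/(1-t)=\sum_{m>n}t^{m-1}$, integration gives $\sum_{m>n}s^{m}/m$. This opens a new summation index $n_{j+1}>n_j$ with exponent $1$. The base case is $\int_0^s dt/(1-t)=\sum_{m\ge1}s^m/m$.

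Concretely, the word $\omega_1\omega_0^{k_j-1}$ first opens index $n_j>n_{j-1}$ with weight $1/n_j$. The subsequent $k_j-1$ copies of $\omega_0$ upgrade this to $1/n_j^{k_j}$. After processing all $r$ blocks we obtain
\[
\sum_{0<n_1<\cdots<n_r}\frac{s^{n_r}}{n_1^{k_1}\cdots n_r^{k_r}}.
\]
Setting $s=1$ gives $\zeta(\mathbf k)$. This evaluation at $s=1$ is justified by Abel's theorem, or simply by monotone convergence, since the coefficients are nonnegative and the limit series converges because $k_r>1$.

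The main point requiring care is the bookkeeping of which variable is outermost, together with the justification of the interchanges. Positivity makes the latter routine, so the only real subtlety is checking that the $s\to1$ limit is finite precisely because $k_r\ge2$.
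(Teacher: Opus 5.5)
Your proof is correct. There is no route to compare it against, because the paper gives no proof of this proposition: Section 2 only recalls it as a standard fact.

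Your argument is the standard one. You build the integral from the innermost variable via $F_j(s)=\int_0^s A_j(t)F_{j-1}(t)\,dt$ and expand $1/(1-t)$ geometrically. Each $\omega_1$ then opens a new index $n_{j}>n_{j-1}$, and each $\omega_0$ divides by the current top index. All interchanges are justified by Tonelli, since every term is nonnegative.

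The one flaw is your first displayed inductive claim. Read literally, with $t_{K_j}=t$ held fixed and the last form kept as an unintegrated factor, it is false: the left side is then the $t$-derivative of your right side, namely $\frac{1}{t}\sum_{0<n_1<\cdots<n_j} t^{n_j}/(n_1^{k_1}\cdots n_j^{k_j-1})$. What you actually prove and use afterwards is the version over the simplex $0\le t_1\le\cdots\le t_{K_j}\le t$, with every form integrated. Stating the claim that way removes the ambiguity.

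Under that reading, Tonelli lets you take the upper limit $1$ directly in the last step, which gives the identity in $[0,\infty]$ and so makes Abel's theorem unnecessary. Finiteness then comes from the convergence of the series when $k_r\ge 2$, which the paper takes as known.
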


In general, iterated integrals satisfy the following shuffle product formula.
\begin{prop}[Shuffle product]\label{shu}
{\em{Let $\{A_i\}$ and $\{B_j\}$ be functions defined on $(0,1)$. Then
\[
\left(\int_0^1 A_1\cdots A_n\right)
\left(\int_0^1 B_1\cdots B_m\right)
=
\sum_{(C_1,\dots,C_{n+m})}
\int_0^1 C_1\cdots C_{n+m},
\]
where the sum runs over all permutations $(C_1,\dots,C_{n+m})$ of $(A_1,\dots,A_n)$ and $(B_1,\dots,B_m)$ that preserve the relative order of each sequence. We assume that the integrals on the left-hand side converge.}}
\end{prop}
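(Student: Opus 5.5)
The plan is the classical proof via Fubini: a product of two iterated integrals over simplices is an integral over a product of simplices, and the product decomposes into simplices indexed by shuffles. Write
\[
\Delta_n:=\{0\le t_1\le\cdots\le t_n\le 1\},\qquad \Delta_m:=\{0\le s_1\le\cdots\le s_m\le 1\}.
\]
Since both iterated integrals on the left-hand side converge, I will assume, as the statement implicitly intends, that they converge absolutely; this holds automatically when the integrands are of constant sign, as for $\omega_0,\omega_1$ on $(0,1)$. With this assumption, Fubini--Tonelli gives
\[
\left(\int_0^1 A_1\cdots A_n\right)\left(\int_0^1 B_1\cdots B_m\right)=\int_{\Delta_n\times\Delta_m}\prod_{i}A_i(t_i)\prod_j B_j(s_j)\,dt\,ds .
\]

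Next I decompose $\Delta_n\times\Delta_m$. A point $(t,s)$ with all $n+m$ coordinates distinct determines a unique total ordering of the combined set $\{t_1,\dots,t_n,s_1,\dots,s_m\}$. That ordering restricts to the given orders on the $t$'s and on the $s$'s, so it is exactly a shuffle $\sigma$ of the two sequences. Conversely, each shuffle $\sigma$ defines the region
\[
D_\sigma:=\{0\le u_1\le\cdots\le u_{n+m}\le 1\},
\]
where $(u_1,\dots,u_{n+m})$ is the list of the variables $t_i$, $s_j$ placed in the positions prescribed by $\sigma$. These regions cover $\Delta_n\times\Delta_m$. Two distinct regions overlap only where some $t_i=s_j$, which is a finite union of hyperplanes and hence has Lebesgue measure zero.

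By additivity of the integral, the product therefore equals $\sum_\sigma\int_{D_\sigma}$. On $D_\sigma$ the integrand is $\prod_k C_k(u_k)$, where $(C_1,\dots,C_{n+m})$ is the shuffled word $\sigma(A,B)$. This is exactly $\int_0^1 C_1\cdots C_{n+m}$, and summing over all order-preserving interleavings gives the stated formula.

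The only delicate point is the analytic justification. The functions are allowed to be singular at $0$ and $1$, so plain convergence is not enough to apply Fubini and to split the domain; absolute integrability is what both steps require. I would state that hypothesis explicitly. Once it is in place, Tonelli applied to $|A_i|$ and $|B_j|$ justifies both the product-to-double-integral step and the decomposition into the regions $D_\sigma$. The combinatorial bijection between generic points and shuffles is routine.
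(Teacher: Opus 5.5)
The paper states this proposition without proof, as the standard shuffle formula for iterated integrals, so there is no argument of the paper's to compare with. Your Fubini--Tonelli argument is the standard proof and it is correct: the product becomes an integral over $\Delta_n\times\Delta_m$, and the shuffle simplices $D_\sigma$ cover this product while overlapping only on the measure-zero sets $\{t_i=s_j\}$. Summing over shuffles $\sigma$, rather than over distinct resulting words, correctly counts multiplicities when some $A_i$ coincides with some $B_j$.

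Your reading of ``converge'' as absolute (Lebesgue) integrability over the simplex is the right one. It also guarantees that each integral on the right-hand side converges. In the paper's application to $\omega_0,\omega_1$, which are positive on $(0,1)$, Tonelli alone already suffices.
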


By this proposition, for any $n,m\in\ZZ_{\ge0}$, we obtain
\[
\widetilde{\mathcal{Z}}^m_{\ZZ}\cdot
\widetilde{\mathcal{Z}}^n_{\ZZ}\subset\widetilde{\mathcal{Z}}^{n+m}_{\ZZ}.
\]
Furthermore, since $\widetilde{\zeta}(2)=-\frac{1}{24}$, it follows that
\begin{align*}
\widetilde{\mathcal{Z}}_{\ZZ}^{\flat, m}\cdot
\widetilde{\mathcal{Z}}_{\ZZ}^{\flat,n}&=\widetilde{\mathcal{Z}}^m_{\ZZ}\cdot
\widetilde{\mathcal{Z}}^n_{\ZZ}+\frac{1}{2}\left(\widetilde{\mathcal{Z}}^{m-1}_{\ZZ}\cdot
\widetilde{\mathcal{Z}}^n_{\ZZ}+\widetilde{\mathcal{Z}}^m_{\ZZ}\cdot
\widetilde{\mathcal{Z}}^{n-1}_{\ZZ}\right)+\frac{1}{4}\left(
\widetilde{\mathcal{Z}}^{m-1}_{\ZZ}\cdot
\widetilde{\mathcal{Z}}^{n-1}_{\ZZ}\right)\\
&\subset
\widetilde{\mathcal{Z}}^{m+n}_{\ZZ}+\frac{1}{2}\left(\widetilde{\mathcal{Z}}^{m+n-1}_{\ZZ}\right)+6\left(\widetilde{\zeta}(2)\cdot\widetilde{\mathcal{Z}}^{m+n-2}_{\ZZ}\right)\\
&\subset
\widetilde{\mathcal{Z}}^{m+n}_{\ZZ}+\frac{1}{2}\left(\widetilde{\mathcal{Z}}^{m+n-1}_{\ZZ}\right)=\widetilde{\mathcal{Z}}_{\ZZ}^{\flat,n+m}.
\end{align*}
In particular, we obtain the following proposition.
\begin{prop}\label{mzvring}
\em{The spaces $\widetilde{\mathcal{Z}}_{\mathbb{Z}}$ and $\widetilde{\mathcal{Z}}^{\flat}_{\mathbb{Z}}$ are $\mathbb{Z}$-subalgebras of $\mathbb{C}$.}
\end{prop}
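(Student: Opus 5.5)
The statement to prove is Proposition~\ref{mzvring}. The plan is to check closure under addition, closure under multiplication, and that the unit lies in each space.

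\textbf{Closure under addition and the unit.} Both $\widetilde{\mathcal{Z}}_{\ZZ}=\sum_{n}\widetilde{\mathcal{Z}}^n_{\ZZ}$ and $\widetilde{\mathcal{Z}}^{\flat}_{\ZZ}=\sum_n\widetilde{\mathcal{Z}}^{\flat,n}_{\ZZ}$ are sums of $\ZZ$-submodules of $\mathbb{C}$, so they are $\ZZ$-submodules. Both contain $1$, since $1\in\ZZ=\widetilde{\mathcal{Z}}^0_{\ZZ}\subset\widetilde{\mathcal{Z}}^{\flat,0}_{\ZZ}$. By bilinearity of multiplication, it therefore suffices to show the graded inclusions $\widetilde{\mathcal{Z}}^m_{\ZZ}\cdot\widetilde{\mathcal{Z}}^n_{\ZZ}\subset\widetilde{\mathcal{Z}}^{m+n}_{\ZZ}$ and $\widetilde{\mathcal{Z}}^{\flat,m}_{\ZZ}\cdot\widetilde{\mathcal{Z}}^{\flat,n}_{\ZZ}\subset\widetilde{\mathcal{Z}}^{\flat,m+n}_{\ZZ}$ for all $m,n\ge0$.

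\textbf{The first inclusion.} Take admissible indices $\mathbf{k}$ and $\mathbf{l}$ of weights $m$ and $n$, both at least $2$; the cases $m,n\in\{0,1\}$ are trivial, since weight $0$ gives $\ZZ$ and weight $1$ gives $\{0\}$. Write each $\zeta$ as an iterated integral using the iterated-integral representation from the previous proposition, and apply Proposition~\ref{shu}. Each shuffle of two words that both begin with $\omega_1$ and end with $\omega_0$ again begins with $\omega_1$ and ends with $\omega_0$. Hence it is the iterated integral of an admissible index of weight $m+n$, and
\[
\zeta(\mathbf{k})\zeta(\mathbf{l})\in\sum_{\mathbf{j}\ \mathrm{admissible},\ \mathrm{wt}\,\mathbf{j}=m+n}\ZZ_{\ge0}\,\zeta(\mathbf{j}).
\]
Dividing by $(2\pi i)^{m+n}=(2\pi i)^m(2\pi i)^n$ gives the normalized version. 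The step that needs care is that convergence of the factors, which is guaranteed by admissibility, licenses the shuffle formula.

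\textbf{The second inclusion.} Expand the product of $\widetilde{\mathcal{Z}}^{\flat,m}_{\ZZ}$ and $\widetilde{\mathcal{Z}}^{\flat,n}_{\ZZ}$ into four pieces, as in the displayed computation preceding the proposition.
\begin{itemize}
\item The integral piece lies in $\widetilde{\mathcal{Z}}^{m+n}_{\ZZ}$ by the first inclusion.
\item The two half-integral pieces lie in $\frac12\widetilde{\mathcal{Z}}^{m+n-1}_{\ZZ}$, again by the first inclusion.
\item The remaining piece is $\frac14\widetilde{\mathcal{Z}}^{m+n-2}_{\ZZ}$, and it is the only nontrivial point. Since $\widetilde\zeta(2)=\zeta(2)/(2\pi i)^2=-\frac{\pi^2/6}{4\pi^2}=-\frac1{24}$, we have $\frac14=-6\,\widetilde\zeta(2)$. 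Hence $\frac14\widetilde{\mathcal{Z}}^{m+n-2}_{\ZZ}=6\,\widetilde\zeta(2)\widetilde{\mathcal{Z}}^{m+n-2}_{\ZZ}\subset\widetilde{\mathcal{Z}}^{m+n}_{\ZZ}$, using the first inclusion with $\widetilde\zeta(2)\in\widetilde{\mathcal{Z}}^2_{\ZZ}$.
\end{itemize}
This shows $\widetilde{\mathcal{Z}}^{\flat,m}_{\ZZ}\cdot\widetilde{\mathcal{Z}}^{\flat,n}_{\ZZ}\subset\widetilde{\mathcal{Z}}^{\flat,m+n}_{\ZZ}$. Summing over $m$ and $n$ then shows that both spaces are closed under multiplication, and therefore both are $\ZZ$-subalgebras of $\mathbb{C}$.
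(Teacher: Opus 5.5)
Your proof is correct and follows the paper's own argument: the shuffle product gives $\widetilde{\mathcal{Z}}^m_{\ZZ}\cdot\widetilde{\mathcal{Z}}^n_{\ZZ}\subset\widetilde{\mathcal{Z}}^{m+n}_{\ZZ}$. For the flat spaces you expand the product into four pieces and absorb the $\frac14$ term via $\frac14=-6\,\widetilde\zeta(2)$, exactly as the paper does, and you also spell out the unit, additive closure, and admissibility of each shuffle term, which the paper leaves implicit.
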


Since the shuffle product preserves the weight, the same type of relations also hold for normalized multiple zeta values.

Let $\mathbb{Z}\langle e_0,e_1\rangle$ denote the noncommutative free algebra generated by the set of two elements $\{e_0,e_1\}$, and consider its $\mathbb{Z}$-submodule
\[
\mathcal{S}_{\mathbb{Z}}
:= \mathbb{Z} + e_1 \mathbb{Z}\langle e_0,e_1\rangle e_0 .
\]
We call $1\in \mathbb{Z}\langle e_0,e_1\rangle$ the \emph{empty word}. 
A \emph{word} means either the empty word or a monic monomial in $\mathbb{Z}\langle e_0,e_1\rangle$.
 For a word where not empty
$w=a_1\cdots a_n\in\mathbb{Z}\langle e_0,e_1\rangle$ with $a_i\in\{e_0,e_1\}$, the integer $n$ is called the {\em length} of $w$ and is denoted by $|w|$. 
\begin{dfn}
 For words $w=a_1\cdots a_n$ and $v=b_1\cdots b_m$ in $\mathbb{Z}\langle e_0,e_1\rangle$, 
we define the \emph{shuffle product} $\shuffle$ recursively with respect to the lengths of words by
\begin{align*}
  w\shuffle v
  &:=a_1(a_2\cdots a_n\shuffle v)
   +b_1(w\shuffle b_2\cdots b_m),\\
  w\shuffle 1
  &=1\shuffle w:=w.
\end{align*}
The shuffle product $\shuffle$ extends bilinearly to 
$\mathbb{Z}\langle e_0,e_1\rangle$, 
thereby endowing both $\mathbb{Z}\langle e_0,e_1\rangle$ and $\mathcal{S}_{\mathbb{Z}}$ with structures of commutative $\mathbb{Z}$-algebras.
\end{dfn}
As a consequence of Proposition~\ref{shu}, we obtain the following.
\begin{thm}
\em{Define a $\ZZ$-module map $L^{10}:\mathcal{S}_{\mathbb{Z}}\to\widetilde{\mathcal{Z}}_{\mathbb{Z}}$ by
\[
e_1 e_0^{k_1-1}\cdots e_1 e_0^{k_r-1}
\longmapsto
(-1)^r \widetilde\zeta(k_1,\dots,k_r)
\qquad (k_r>1).
\]
Then $L^{10}$ is a homomorphism of $\mathbb{Z}$-algebras.}
\end{thm}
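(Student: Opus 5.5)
The plan is to derive the statement from the shuffle product formula (Proposition~\ref{shu}) via the iterated-integral representation of multiple zeta values, paying attention to signs and to the normalization by $(2\pi i)^{-n}$.

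\textbf{Step 1: values on words.} For an admissible word $w=e_1e_0^{k_1-1}\cdots e_1e_0^{k_r-1}$, the iterated-integral proposition gives
\[
\zeta(k_1,\dots,k_r)=\int_0^1 \omega_{a_1}\cdots\omega_{a_n}
\qquad\text{when } w=a_1\cdots a_n,\ a_j=e_{\epsilon_j}.
\]
Write $N(w)$ for the number of letters $e_1$ in $w$. Then
\[
L^{10}(w)=(2\pi i)^{-|w|}(-1)^{N(w)}\int_0^1\omega_{a_1}\cdots\omega_{a_n}.
\]
This is equivalent to assigning the form $-\omega_1/(2\pi i)$ to $e_1$ and $\omega_0/(2\pi i)$ to $e_0$. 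Set $L^{10}(1)=1$ for the empty word.

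\textbf{Step 2: closure of $\mathcal{S}_\ZZ$.} I will check that $\mathcal{S}_\ZZ$ is closed under $\shuffle$. Shuffling two words that both begin with $e_1$ and end with $e_0$ yields only words beginning with $e_1$ and ending with $e_0$, because the first letter of any shuffle is the first letter of one of the factors, and likewise for the last letter. Products with the empty word are trivial. Hence $L^{10}(w\shuffle v)$ is defined.

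\textbf{Step 3: multiplicativity on words.} Let $w$ and $v$ be admissible words with assigned forms $A_i$ and $B_j$. Proposition~\ref{shu} applies because each integral converges, by admissibility. It expresses the product of the two iterated integrals as the sum over all order-preserving interleavings $C$ of the integrals $\int_0^1 C_1\cdots C_{n+m}$. I will verify that these interleavings correspond exactly, with multiplicity, to the monomials of $w\shuffle v$. This follows by induction on $|w|+|v|$ from the recursive definition, since both sides decompose according to which sequence supplies the first letter.

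Each interleaved word $u$ satisfies $|u|=|w|+|v|$ and $N(u)=N(w)+N(v)$. The scalar factors $(2\pi i)^{-|u|}(-1)^{N(u)}$ are therefore multiplicative. This gives
\[
L^{10}(w)\,L^{10}(v)=L^{10}(w\shuffle v).
\]
Extending bilinearly yields $L^{10}(xy)=L^{10}(x)L^{10}(y)$ on $\mathcal{S}_\ZZ$. The map also sends $1$ to $1$ and is $\ZZ$-linear, so it is a $\ZZ$-algebra homomorphism. Its image lies in $\widetilde{\mathcal{Z}}_\ZZ$, consistent with Proposition~\ref{mzvring}.

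\textbf{Main obstacle.} The only delicate point is the combinatorial identification in Step 3: matching the recursive definition of $\shuffle$ with the set of order-preserving permutations, including multiplicities when letters repeat. I would handle this by treating letters as labeled positions during the induction, so that repeated letters are kept distinct until the end.
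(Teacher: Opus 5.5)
Your proposal is correct and takes the same route as the paper, which states the result directly as a consequence of the shuffle product formula for iterated integrals (Proposition~\ref{shu}) together with the iterated-integral representation of $\zeta(\mathbf{k})$. You additionally spell out what the paper leaves implicit: the closure of $\mathcal{S}_{\ZZ}$ under $\shuffle$, the correspondence between order-preserving interleavings and the recursive definition of $\shuffle$, and the multiplicativity of the factors $(-1)^{N(w)}(2\pi i)^{-|w|}$.
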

Let $\mathcal{S}_{\mathbb{Z}}^n$ denote the $\mathbb{Z}$-submodule of $\mathcal{S}_{\mathbb{Z}}$ spanned by all words of length $n$. Then we have
\[
L^{10}(\mathcal{S}_{\mathbb{Z}}^n)=\widetilde{\mathcal{Z}}_{\mathbb{Z}}^{n}.
\]
\section{Kontsevich graphs and their integrals}

In this section, we recall the definition of Kontsevich graphs and their associated integrals.
\begin{dfn}
Let $V$ be a finite set. A pair $G=(V,E)$ consisting of $V$ and a subset
$E\subset (V\times V)\setminus\{(x,x)\mid x\in V\}$ is called a
{\em directed simple graph}. When $G=(V,E)$ is a directed simple graph,
the elements of $V$ are called the {\em vertices} of $G$, and the elements
of $E$ are called the {\em edges} of $G$.
\end{dfn}

Let $G=(V,E)$ be a directed simple graph. For $x,y\in V$, if $(x,y)\in E$,
we depict this by
\[
\begin{tikzpicture}[baseline=(current bounding box.center)]
  \begin{scope}[decoration={
      markings,
      mark=at position 1.0 with {\arrow{>}}}
    ]
    \node (x) at (0,0) {$x$};
    \node (y) at (1,0) {$y$};
    \draw[postaction=decorate] (x) -- (y);
  \end{scope}
\end{tikzpicture}
\]
as usual.

\begin{ex}\label{gra}
Let $V=\{v_0,v_1,v_2,v_3\}$ and
$E=\{(v_0,v_1),(v_0,v_2),(v_0,v_3)\}$. Then the directed simple graph
$G=(V,E)$ is depicted as
\[
\begin{tikzpicture}[baseline=(current bounding box.center)]
  \begin{scope}[decoration={
      markings,
      mark=at position 1.0 with {\arrow{>}}}
    ]
    \node (x) at (0,1) {$v_0$};
    \node (y) at (-1,0) {$v_1$};
    \node (z) at (0,0) {$v_2$};
    \node (w) at (1,0) {$v_3$};
    \draw[postaction=decorate] (x) -- (y);
    \draw[postaction=decorate] (x) -- (z);
    \draw[postaction=decorate] (x) -- (w);
  \end{scope}
\end{tikzpicture}
\]
\end{ex}

\begin{dfn}
Let $G=(V,E)$ be a directed simple graph. For a vertex $x\in V$, the
cardinality $\#\{(x,y)\in E\mid y\in V\}$ is called the {\em out-degree} of
$x$, and the cardinality $\#\{(y,x)\in E\mid y\in V\}$ is called the
{\em in-degree} of $x$.
\end{dfn}

\begin{ex}
For the directed simple graph $G=(V,E)$ in Example~\ref{gra}, the vertices
$v_1,v_2,v_3\in V$ have out-degree $0$ and in-degree $1$, while the vertex
$v_0\in V$ has out-degree $3$ and in-degree $0$.
\end{ex}

\begin{dfn}
Let $G=(V,E)$ be a directed simple graph. We say that $G$ is {\em connected}
if for any two distinct vertices $x,y\in V$, there exists a sequence of
vertices $\{a_i\}_{i=0}^n\subset V$ satisfying the following conditions:
\begin{itemize}
  \item[(1)] $a_0=x$ and $a_n=y$.
  \item[(2)] For every $1\le i\le n$, either $(a_{i-1},a_i)\in E$ or
  $(a_i,a_{i-1})\in E$ holds.
\end{itemize}
\end{dfn}

\begin{ex}
The directed simple graph in Example~\ref{gra} is connected.
\end{ex}

\begin{dfn}
For a positive integer $n$, set $[n]:=\{1,\dots,n\}$. Let $G=(V,E)$ be a
directed simple graph. A bijection $\varepsilon:E\to[\#E]$ is called an
{\em ordering} of the graph $G$. The pair $(G,\varepsilon)$ is then called
an {\em ordered directed simple graph}.
\end{dfn}

\begin{dfn}
For a positive integer $n\ge1$, a connected ordered directed simple graph
$(\Gamma,\varepsilon)$ is called a {\em Kontsevich graph of weight $n$} if
it satisfies the following conditions:
\begin{itemize}
  \item[(1)] $\Gamma$ has exactly $n+2$ vertices and $2n$ edges.
  \item[(2)] Among the $n+2$ vertices, exactly $n$ vertices have out-degree
  $2$, and the remaining $2$ vertices have out-degree $0$.
  \item[(3)] The two vertices of out-degree $0$ are labeled by $A$ and $B$
  without repetition.
\end{itemize}
Vertices of $\Gamma$ with out-degree $2$ are called {\em internal vertices},
and vertices with out-degree $0$ are called {\em external vertices}. A graph
with exactly two vertices and no edges is called a {\em Kontsevich graph of
weight $0$}.
\end{dfn}

\begin{ex}
For any choice of ordering, the directed simple graph $G$ in
Example~\ref{gra} is not a Kontsevich graph.
\end{ex}

\begin{ex}
Assigning an arbitrary ordering to any of the following graphs yields a
Kontsevich graph of weight $2$.
\[
    \begin{tikzpicture}[baseline=(current bounding box.center)]
		\begin{scope}[decoration={
				markings,
				mark=at position 1.0 with {\arrow{>}}}
			] 
			\node (u) at (0,-2) {$A$};
			\node (v) at (1,-2) {$B$};
			\node (y) at (0,-1){$x$};
			\node (z) at (1,-1){$y$};
			\draw[postaction=decorate] (z) -- (y);
			\draw[postaction=decorate] (z) -- (v);
			\draw[postaction=decorate] (y) -- (u);
			\draw[postaction=decorate] (y) -- (v);
		\end{scope}
	\end{tikzpicture}\qquad
	\begin{tikzpicture}[baseline=(current bounding box.center)]
		\begin{scope}[decoration={
				markings,
				mark=at position 1.0 with {\arrow{>}}}
			] 
			\node (u) at (0,-2) {$A$};
			\node (v) at (1,-2) {$B$};
			\node (y) at (0,-1){$x$};
			\node (z) at (1,-1){$y$};
			\draw[postaction=decorate] (y) -- (u);
			\draw[postaction=decorate] (y) -- (z);
			\draw[postaction=decorate] (z) -- (u);
			\draw[postaction=decorate] (z) -- (v);
		\end{scope}
	\end{tikzpicture}\qquad
    \begin{tikzpicture}[baseline=(current bounding box.center)]
		\begin{scope}[decoration={
				markings,
				mark=at position 1.0 with {\arrow{>}}}
			] 
			\node (u) at (0,-2) {$A$};
			\node (v) at (1,-2) {$B$};
			\node (x) at (0,-1){$x$};
			\node (y) at (1,-1){$y$};
			\draw[postaction=decorate] (x) -- (v);
			\draw[postaction=decorate] (x) -- (u);
			\draw[postaction=decorate] (y) -- (u);
			\draw[postaction=decorate] (y) -- (v);
		\end{scope}
	\end{tikzpicture}
    \qquad
    \begin{tikzpicture}[baseline=(current bounding box.center)]
				\begin{scope}[decoration={
						markings,
						mark=at position 1 with {\arrow{>}}}
					] 
					\node (U) at (0,-1) {$x$};
					\node (V) at (1,-1) {$y$};
					\node (A) at (0,-2) {$A$};
					\node (B) at (1,-2) {$B$};
					\draw[postaction=decorate] (U) to[bend left] (V);
					\draw[postaction=decorate] (V) to[bend left] (U);
					\draw[postaction=decorate] (U) -- (A);
					\draw[postaction=decorate] (V) -- (B);
				\end{scope}
			\end{tikzpicture}
	\]
Moreover, by convention, the external vertex labeled $A$ is drawn on the
left, and the external vertex labeled $B$ is drawn on the right.
\end{ex}

Let $n\ge1$. For a Kontsevich graph $(\Gamma,\varepsilon)$ of weight $n$ and
an element $\sigma$ of the symmetric group on $2n$ letters,
$(\Gamma,\sigma\circ\varepsilon)$ is again a Kontsevich graph.

\begin{dfn}
Let $\mathcal{K}_{\mathbb{Z}}$ denote the free $\mathbb{Z}$-module generated
by all Kontsevich graphs, modulo the relations
\[
(\Gamma,\varepsilon)=\sgn(\sigma)(\Gamma,\sigma\circ\varepsilon),
\]
where $\Gamma$ has weight at least $1$. Let $\mathcal{K}_{\mathbb{Z}}^n$
denote the submodule generated by equivalence classes of Kontsevich graphs
of weight $n$. Then
\[
\mathcal{K}_{\mathbb{Z}}=\bigoplus_{n=0}^{\infty}\mathcal{K}_{\mathbb{Z}}^n.
\]
Here $\mathcal{K}_{\mathbb{Z}}^0$ is the $\mathbb{Z}$-module generated by
Kontsevich graphs of weight $0$, which is canonically isomorphic to
$\mathbb{Z}$; hence we identify $\mathcal{K}_{\mathbb{Z}}^0$ with
$\mathbb{Z}$.
\end{dfn}

\begin{rem}
We often denote a Kontsevich graph $(\Gamma,\varepsilon)$, or its equivalence
class, simply by $\Gamma$.
\end{rem}

\begin{rem}
By identifying the external vertices of a Kontsevich graph with their
labels, we denote the set of external vertices by $R=\{A,B\}$.
\end{rem}

Let $n,m\ge1$. For the upper half-plane $\mathbb{H}$, set
\[
\mathrm{Conf}_n(\mathbb{H})
:=\{(z_1,\dots,z_n)\in\mathbb{H}^n \mid z_i\ne z_j\ (i\ne j)\},
\]
and
\[
\mathrm{Conf}_{m,+}(\mathbb{R})
:=\{(x_1,\dots,x_m)\in\mathbb{R}^m \mid x_1<\cdots<x_m\}.
\]
Let
\[
\mathrm{Aff}_{+}
:=\{x\mapsto ax+b \mid a\in\mathbb{R}_{>0},\, b\in\mathbb{R}\}
\]
denote the group of orientation-preserving affine transformations of
$\mathbb{H}$ (or $\mathbb{R}$). This group acts diagonally on
$\mathrm{Conf}_n(\mathbb{H})\times\mathrm{Conf}_{m,+}(\mathbb{R})$, and we set
\[
\mathfrak{C}_{n,m}
:=\bigl(\mathrm{Conf}_n(\mathbb{H})\times
\mathrm{Conf}_{m,+}(\mathbb{R})\bigr)/\mathrm{Aff}_{+}.
\]

\begin{rem}
The space $\mathfrak{C}_{n,m}$ is a real manifold of dimension $2n+m-2$; it
is orientable and connected.
\end{rem}

From now on, we use $(z_1,\dots,z_n,x_1,\dots,x_m)$ as the standard
coordinates on $\mathrm{Conf}_n(\mathbb{H})\times
\mathrm{Conf}_{m,+}(\mathbb{R})$.

\begin{dfn}
Let $\Gamma$ be a Kontsevich graph of weight $n$. Let $U$ denote the set of
internal vertices and $R=\{A,B\}$ the set of external vertices. Fix a
bijection $\upsilon:U\to[n]$. For $x\in U$ and $y\in U\sqcup R$ with $x\ne y$,
we define a $1$-form $\alpha_{x\to y}$ as follows:
\begin{itemize}
  \item[(1)] If $y\in U$, set
  \[
  \alpha_{x\to y}
  :=\frac{1}{2\pi i}
  d\log\!\left(
  \frac{z_{\upsilon(x)}-z_{\upsilon(y)}}
       {\overline{z_{\upsilon(x)}}-z_{\upsilon(y)}}
  \right)
  \in\Omega^1(\mathfrak{C}_{n,2}).
  \]
  \item[(2)] If $y=A$, set
  \[
  \alpha_{x\to A}
  :=\frac{1}{2\pi i}
  d\log\!\left(
  \frac{z_{\upsilon(x)}-x_1}
       {\overline{z_{\upsilon(x)}}-x_1}
  \right)
  \in\Omega^1(\mathfrak{C}_{n,2}).
  \]
  \item[(3)] If $y=B$, set
  \[
  \alpha_{x\to B}
  :=\frac{1}{2\pi i}
  d\log\!\left(
  \frac{z_{\upsilon(x)}-x_2}
       {\overline{z_{\upsilon(x)}}-x_2}
  \right)
  \in\Omega^1(\mathfrak{C}_{n,2}).
  \]
\end{itemize}
The form $\alpha_{x\to y}$ is called the {\em logarithmic propagator}.
\end{dfn}

\begin{rem}
For $i\ne j$, the expressions
$\frac{z_i-z_j}{\overline{z_i}-z_j}$,
$\frac{z_i-x_1}{\overline{z_i}-x_1}$, and
$\frac{z_i-x_2}{\overline{z_i}-x_2}$ are invariant under the action of
$\mathrm{Aff}_{+}$. Hence the logarithmic propagator can be regarded as a
$1$-form on $\mathfrak{C}_{n,2}$.
\end{rem}

Let $E$ denote the set of edges of a Kontsevich graph $(\Gamma,\varepsilon)$.
For $e_i\in E$ satisfying $\varepsilon(e_i)=i$, denote by $e_{i0}$ and
$e_{i1}$ the initial and terminal vertices of $e_i$, respectively. Set
\[
\alpha_\Gamma
:=\bigwedge_{i=1}^{2n}\alpha_{e_{i0}\to e_{i1}}
\in\Omega^{2n}(\mathfrak{C}_{n,2}),
\]
and define
\[
c_\Gamma
:=\int_{\mathfrak{C}_{n,2}}\alpha_\Gamma,
\]
which is called the {\em Kontsevich integral} of $\Gamma$, or simply the
{\em integral}. For the Kontsevich graph $\Gamma_0$ of weight $0$, we set
$c_{\Gamma_0}=1$.

\begin{rem}
The value of the Kontsevich integral of $\Gamma$ does not depend on the
choice of the bijection $\upsilon:U\to[n]$.
\end{rem}

It is known from~\cite{ARTW16} that $c_\Gamma$ converges absolutely. Moreover,
in~\cite{BPP} it is shown that for a Kontsevich graph $\Gamma$ of weight $n$,
we have $c_\Gamma\in\widetilde{\mathcal{Z}}_{\ZZ}^{\flat,n}$.

Let $(\Gamma_1,\varepsilon_1)$ and $(\Gamma_2,\varepsilon_2)$ be Kontsevich
graphs of weights $m$ and $n$, respectively. We define their {\em
join}, denoted by $(\Gamma_1*\Gamma_2,\varepsilon_1*\varepsilon_2)$,
to be the Kontsevich graph of weight $m+n$ constructed as follows.
\[
	\Gamma_1*\Gamma_2=\left(
	\begin{tikzpicture}[baseline=(current bounding box.center)]
		\begin{scope}[decoration={
				markings,
				mark=at position 1 with {\arrow{>}}}
			] 
            \node (G)at (0.5,1){$\Gamma_{1}$};
			\node (A) at (0,0) {$A$};
			\node (B) at (1,0) {$B$};
			\draw[postaction=decorate,dashed] (G) to (A);
			\draw[postaction=decorate,dashed] (G) to (B);
		\end{scope}
	\end{tikzpicture}
	\right)
	*
	\left(
	\begin{tikzpicture}[baseline=(current bounding box.center)]
		\begin{scope}[decoration={
				markings,
				mark=at position 1 with {\arrow{>}}}
			] 
            \node (G)at (0.5,1){$\Gamma_{2}$};
			\node (A) at (0,0) {$A$};
			\node (B) at (1,0) {$B$};
			\draw[postaction=decorate,dashed] (G) to (A);
			\draw[postaction=decorate,dashed] (G) to (B);
		\end{scope}
	\end{tikzpicture}
	\right)
	:=
	\begin{tikzpicture}[baseline=(current bounding box.center)]
		\begin{scope}[decoration={
				markings,
				mark=at position 1 with {\arrow{>}}}
			] 
			\node (A) at (0,0) {$A$};
			\node (B) at (1,0) {$B$};
			\node (G) at (0,1.5) {$\Gamma_1$};
			\node (H) at (1,1.5){$\Gamma_2$};
            \draw[postaction=decorate,dashed] (G) to (A);
			\draw[postaction=decorate,dashed] (G) to (B);
			\draw[postaction=decorate,dashed] (H) to (A);
			\draw[postaction=decorate,dashed] (H) to (B);
		\end{scope}
	\end{tikzpicture}.
	\]

The ordering $\varepsilon_1*\varepsilon_2$ is defined by
\[
\varepsilon_1*\varepsilon_2(e):=
\begin{cases}
\varepsilon_1(e), & e\in E(\Gamma_1),\\
2m+\varepsilon_2(e), & e\in E(\Gamma_2).
\end{cases}
\]
This yields a Kontsevich graph of weight $m+n$.

For equivalence classes
$[(\Gamma_1,\varepsilon_1)],[(\Gamma_2,\varepsilon_2)]\in\mathcal{K}_{\mathbb{Z}}$,
we define
\[
[(\Gamma_1,\varepsilon_1)]*[(\Gamma_2,\varepsilon_2)]
:=[(\Gamma_1*\Gamma_2,\varepsilon_1*\varepsilon_2)],
\]
and extend this operation linearly. Then
$(\mathcal{K}_{\mathbb{Z}},*)$ becomes a $\mathbb{Z}$-algebra.

Let $\sigma$ be the element of the symmetric group on $2m+2n$ letters given
by
\[
\sigma=
\begin{pmatrix}
1&\cdots&2m&2m+1&\cdots&2m+2n\\
2n+1&\cdots&2n+2m&1&\cdots&2n
\end{pmatrix}.
\]
Then $\sigma\circ(\varepsilon_1*\varepsilon_2)=\varepsilon_2*\varepsilon_1$,
and since $\sgn(\sigma)=1$, the algebra $(\mathcal{K}_{\mathbb{Z}},*)$ is
commutative. Moreover, by construction,
\[
\alpha_{\Gamma_1*\Gamma_2}
=\alpha_{\Gamma_1}\wedge\alpha_{\Gamma_2},
\]
and hence $c(\Gamma_1*\Gamma_2)=c(\Gamma_1)c(\Gamma_2)$. Therefore, by
extending the integral linearly, we obtain a homomorphism of $\mathbb{Z}$-algebras
\[
c:\mathcal{K}_{\mathbb{Z}}\to\widetilde{\mathcal{Z}}_{\ZZ}^{\flat,n}.
\]
\section{Ritland's method}
In this section, we describe the method introduced by Ritland in \cite{Rit}.
\begin{dfn}
  Let $\mathcal{F}:=\mathbb{Z}\langle \mu,\nu\rangle$ denote the noncommutative free $\mathbb{Z}$-algebra generated by $\mu$ and $\nu$.
\end{dfn}
For a Kontsevich graph $\Gamma$ of weight $n\ge1$, define $\mu(\Gamma)$ and $\nu(\Gamma)$ by
\[
\mu(\Gamma)=
\mu\left(
\begin{tikzpicture}[baseline=(current bounding box.center)]
  \begin{scope}[decoration={
      markings,
      mark=at position 1 with {\arrow{>}}}
    ] 
      \node (G)at (0.5,1) {$\Gamma$};
      \node (A) at (0,0) {$A$};
      \node (B) at (1,0) {$B$};
      \draw[postaction=decorate,dashed] (G) to (A);
      \draw[postaction=decorate,dashed] (G) to (B);
  \end{scope}
\end{tikzpicture}
\right) := 
\begin{tikzpicture}[baseline=(current bounding box.center)]
  \begin{scope}[decoration={
      markings,
      mark=at position 1 with {\arrow{>}}}
    ] 
      \node (G)at (0,1) {$\Gamma$};
      \node (A) at (0,-1) {$A$};
      \node (B) at (1,0) {$B'$};
      \node (C) at (1,-1) {$B$};
      \draw[postaction=decorate,dashed] (G) to (A);
      \draw[postaction=decorate,dashed] (G) to (B);
      \draw[postaction=decorate] (B) to (A);
      \draw[postaction=decorate] (B) to (C);
  \end{scope}
\end{tikzpicture}
\qquad
\nu(\Gamma)=\nu\left(
\begin{tikzpicture}[baseline=(current bounding box.center)]
  \begin{scope}[decoration={
      markings,
      mark=at position 1 with {\arrow{>}}}
    ] 
      \node (G)at (0.5,1) {$\Gamma$};
      \node (A) at (0,0) {$A$};
      \node (B) at (1,0) {$B$};
      \draw[postaction=decorate,dashed] (G) to (A);
      \draw[postaction=decorate,dashed] (G) to (B);
  \end{scope}
\end{tikzpicture}
\right) := 
\begin{tikzpicture}[baseline=(current bounding box.center)]
  \begin{scope}[decoration={
      markings,
      mark=at position 1 with {\arrow{>}}}
    ] 
      \node (G) at (1,1){$\Gamma$};
      \node (A) at (0,0) {$A'$};
      \node (B) at (1,-1) {$B$};
      \node (C) at (0,-1) {$A$};
      \draw[postaction=decorate,dashed] (G) to (A);
      \draw[postaction=decorate,dashed] (G) to (B);
      \draw[postaction=decorate] (A) to (B);
      \draw[postaction=decorate] (A) to (C);
  \end{scope}
\end{tikzpicture}.
\]
Moreover, we fix the orderings of $\mu(\Gamma)$ and $\nu(\Gamma)$ so that
\[
\alpha_{\mu(\Gamma)}=\alpha_{\Gamma}\wedge\alpha_{B'\to A}\wedge\alpha_{B'\to B},
\qquad
\alpha_{\nu(\Gamma)}=\alpha_{\Gamma}\wedge\alpha_{A'\to A}\wedge\alpha_{A'\to B}.
\]
For the Kontsevich graph $1$ of weight $0$, set $\mu(1)=\nu(1):=1$. In this way, $\mathcal{K}_{\mathbb{Z}}$ becomes an $\mathcal{F}$-module.

Similarly, for a nonempty word $\omega\in\mathcal{S}_{\ZZ}$, define
\[
\mu(\omega):=\omega e_0,
\qquad
\nu(\omega):=e_1\omega,
\]
and set $\mu(1)=\nu(1):=1$ for the empty word. Then $\mathcal{S}_{\ZZ}$ becomes an $\mathcal{F}$-module.
Below we recall the definitions of (undirected) simple graphs and rooted trees.

\begin{dfn}[Simple graph]
Let $V$ be a finite set. A pair $G=(V,E)$ consisting of $V$ and a family $E$ of subsets of two elements of $V$ is called an \emph{undirected simple graph}, or simply a \emph{simple graph}. Elements of $V$ are called the \emph{vertices} of $G$, and elements of $E$ are called the \emph{edges} of $G$.
\end{dfn}

Let $G=(V,E)$ be a simple graph. For $x,y\in V$, if $\{x,y\}\in E$, we depict this as
\[
\begin{tikzpicture}[baseline=(current bounding box.center)]
	\begin{scope}
		\node (x) at (0,0) {$x$};
		\node (y) at (1,0) {$y$};
		\draw[postaction=decorate] (x) -- (y);
	\end{scope}
\end{tikzpicture}
\]
as illustrated above.

\begin{ex}\label{mgra}
Let $V=\{v_0,v_1,v_2,v_3\}$ and $E=\{\{v_0,v_1\},\{v_0,v_2\},\{v_0,v_3\}\}$. Then the simple graph $G=(V,E)$ can be depicted as
\[
\begin{tikzpicture}[baseline=(current bounding box.center)]
	\begin{scope}
		\node (x) at (0,1) {$v_0$};
		\node (y) at (-1,0) {$v_1$};
		\node (z) at (0,0) {$v_2$};
		\node (w) at (1,0) {$v_3$};
		\draw[postaction=decorate] (x) -- (y);
		\draw[postaction=decorate] (x) -- (z);
		\draw[postaction=decorate] (x) -- (w);
	\end{scope}
\end{tikzpicture}
\]
as shown above.
\end{ex}

As in the case of directed simple graphs, we define the degree and connectivity as follows.

\begin{dfn}[Degree]
Let $G=(V,E)$ be a simple graph. For $x\in V$, the number
\[
\#\{\{x,y\}\in E \mid y\in V\}
\]
is called the \emph{degree} of $x$.
\end{dfn}

\begin{ex}
In the simple graph of Example~\ref{mgra}, the degree of the vertex $v_0$ is $3$, and the degree of each of the other vertices is $1$.
\end{ex}

\begin{dfn}
Let $G=(V,E)$ be a simple graph. We say that $G$ is \emph{connected} if for any two distinct vertices $x,y\in V$, there exists a sequence of vertices $\{a_i\}_{i=0}^{n}\subset V$ satisfying the following conditions:
\begin{itemize}
	\item[(1)] $a_0=x$ and $a_n=y$.
	\item[(2)] For any $1\le i \le n$, we have $\{a_{i-1},a_i\}\in E$.
\end{itemize}
\end{dfn}

We next define cycles in simple graphs.

\begin{dfn}[Cycle]
Let $G=(V,E)$ be a simple graph. A sequence of vertices $\{a_i\}_{i=0}^{n}\subset V$ satisfying the following conditions is called a \emph{cycle} of $G$:
\begin{itemize}
	\item[(1)] $a_0=a_n$.
	\item[(2)] For any $1\le i \le n$, we have $\{a_{i-1},a_i\}\in E$.
	\item[(3)] For any $0\le i< j \le n$, we have $a_i\ne a_j$ unless $i=0$ and $j=n$.
\end{itemize}
\end{dfn}

We now define trees and rooted trees.

\begin{dfn}[Rooted tree]
A connected simple graph $T=(V,E)$ with no cycles is called a \emph{tree}. A vertex of a tree $T$ with degree $1$ is called a \emph{leaf}.  
For a tree $T=(V,E)$ and a vertex $x\in V$, the pair $(T,x)$ is called a \emph{rooted tree}, and $x$ is called the \emph{root} of $(T,x)$.  
A rooted tree $(T,x)$ is often denoted simply by $T$, omitting the root from the notation.
\end{dfn}

\begin{rem}
When drawing a rooted tree $T$, the root is represented by a $\bullet$.
\end{rem}

\begin{ex}
The graph in Example~\ref{mgra} is a tree. If we choose $v_0$ as the root, the corresponding rooted tree is depicted as
\[
\begin{tikzpicture}[baseline=(current bounding box.center)]
	\begin{scope}
		\node (x) at (0,-1) {$\bullet$};
		\node (y) at (-1,0) {$v_1$};
		\node (z) at (0,0) {$v_2$};
		\node (w) at (1,0) {$v_3$};
		\draw[postaction=decorate] (x) -- (y);
		\draw[postaction=decorate] (x) -- (z);
		\draw[postaction=decorate] (x) -- (w);
	\end{scope}
\end{tikzpicture}
\]
as shown above.  
By convention, the root is drawn at the lowest position.
\end{ex}
\begin{dfn}
For $n \ge 2$, let $\mathcal{T}_{\ZZ}^n$ denote the free $\ZZ$-module generated by all labeled rooted trees $T$ satisfying the following conditions.
\begin{itemize}
  \item[(1)] The degree of each vertex of $T$ is at most $3$.
  \item[(2)] Each leaf of $T$ is labeled by $e$.  
  Vertices of degree $2$ are labeled by either $\mu$ or $\nu$, and vertices of degree $3$ are labeled by $*$.
  \item[(3)] Define the weight of $T$ by $2\# e+\#\mu+\#\nu$.  
  Then
  \[
    2\# e+\#\mu+\#\nu = n,
  \]
  where $\#$ denotes the number of occurrences of the corresponding label in $T$.
\end{itemize}
We further set
\[
\mathcal{T}_{\ZZ}^0:=\ZZ,\qquad \mathcal{T}_{\ZZ}^1:=\{0\},
\]
and define
\[
\mathcal{T}_{\ZZ}:=\bigoplus_{n=0}^{\infty}\mathcal{T}_{\ZZ}^n .
\]
\end{dfn}

\begin{ex}
The rooted trees contained in $\mathcal{T}_{\ZZ}^4$ are given as follows.
\begin{align*}
\begin{gathered}
\begin{tikzpicture}[baseline=(current bounding box.center)]
	\def\ysep{1.5}
	\node (A) at (-0.5,1*\ysep) {$e$};
	\node (B) at (0.5,1*\ysep) {$e$};
	\node (S) at (0,0.5*\ysep) {$*$};
	\node (R) at (0,0*\ysep) {$\bullet$};
	\draw (A)--(S)--(B);
	\draw (S)--(R);
\end{tikzpicture}
\end{gathered}
\qquad
\begin{gathered}
\begin{tikzpicture}[baseline=(current bounding box.center)]
	\def\ysep{1.5}
	\node (A) at (0,1.5*\ysep) {$e$};
	\node (B) at (0,1*\ysep) {$\mu$};
	\node (R) at (0,0.5*\ysep) {$\nu$};
	\node (W) at (0,0*\ysep) {$\bullet$};
	\draw (A)--(B);
	\draw (B)--(R);
	\draw (R)--(W);
\end{tikzpicture}
\end{gathered}
\qquad
\begin{gathered}
\begin{tikzpicture}[baseline=(current bounding box.center)]
	\def\ysep{1.5}
	\node (A) at (0,1.5*\ysep) {$e$};
	\node (B) at (0,1*\ysep) {$\nu$};
	\node (R) at (0,0.5*\ysep) {$\mu$};
	\node (W) at (0,0*\ysep) {$\bullet$};
	\draw (A)--(B);
	\draw (B)--(R);
	\draw (R)--(W);
\end{tikzpicture}
\end{gathered}
\end{align*}
\end{ex}

Moreover, we define
\begin{align*}
\begin{gathered}
\begin{tikzpicture}[baseline=(current bounding box.center)]
	\def\ysep{1}
	\node (A) at (0,1*\ysep) {$g_1$};
	\node (R) at (0,0*\ysep) {$\bullet$};
	\draw (A)--(R);
\end{tikzpicture}
\end{gathered}
\cdot
\begin{gathered}
\begin{tikzpicture}[baseline=(current bounding box.center)]
	\def\ysep{1}
	\node (A) at (0,1*\ysep) {$g_2$};
	\node (R) at (0,0*\ysep) {$\bullet$};
	\draw (A)--(R);
\end{tikzpicture}
\end{gathered}
:=
\begin{gathered}
\begin{tikzpicture}[baseline=(current bounding box.center)]
	\def\ysep{1.5}
	\node (A) at (-0.5,1*\ysep) {$g_1$};
	\node (B) at (0.5,1*\ysep) {$g_2$};
	\node (S) at (0,0.5*\ysep) {$*$};
	\node (R) at (0,0*\ysep) {$\bullet$};
	\draw (A)--(S)--(B);
	\draw (S)--(R);
\end{tikzpicture}
\end{gathered}
\qquad
\mu\!\left(
\begin{gathered}
\begin{tikzpicture}[baseline=(current bounding box.center)]
	\def\ysep{1}
	\node (A) at (0,1*\ysep) {$g$};
	\node (R) at (0,0*\ysep) {$\bullet$};
	\draw (A)--(R);
\end{tikzpicture}
\end{gathered}
\right)
:=
\begin{gathered}
\begin{tikzpicture}[baseline=(current bounding box.center)]
	\def\ysep{1.5}
	\node (A) at (0,1*\ysep) {$g$};
	\node (B) at (0,0.5*\ysep) {$\mu$};
	\node (R) at (0,0*\ysep) {$\bullet$};
	\draw (A)--(B);
	\draw (B)--(R);
\end{tikzpicture}
\end{gathered}
\qquad
\nu\!\left(
\begin{gathered}
\begin{tikzpicture}[baseline=(current bounding box.center)]
	\def\ysep{1}
	\node (A) at (0,1*\ysep) {$g$};
	\node (R) at (0,0*\ysep) {$\bullet$};
	\draw (A)--(R);
\end{tikzpicture}
\end{gathered}
\right)
:=
\begin{gathered}
\begin{tikzpicture}[baseline=(current bounding box.center)]
	\def\ysep{1.5}
	\node (A) at (0,1*\ysep) {$g$};
	\node (B) at (0,0.5*\ysep) {$\nu$};
	\node (R) at (0,0*\ysep) {$\bullet$};
	\draw (A)--(B);
	\draw (B)--(R);
\end{tikzpicture}
\end{gathered}
\end{align*}
and set $\mu(1):=1$, $\nu(1):=1$.
With these definitions, $\mathcal{T}_{\ZZ}$ becomes a $\ZZ$-algebra and an $\mathcal{F}$-module.
In addition, we denote the tree
\[
\begin{gathered}
\begin{tikzpicture}[baseline=(current bounding box.center)]
	\def\ysep{1}
	\node (A) at (0,1*\ysep) {$e$};
	\node (R) at (0,0*\ysep) {$\bullet$};
	\draw (A)--(R);
\end{tikzpicture}
\end{gathered}
\]
simply by $e$.
\begin{dfn}
Let
\[
G:\mathcal{T}_{\ZZ}\to\mathcal{K}_{\ZZ}
\]
be the map which is a homomorphism of $\mathcal{F}$-modules and also a homomorphism of $\ZZ$-algebras.  
We define $G$ by
\[
G(e)=
\begin{tikzpicture}[baseline=(current bounding box.center)]
	\begin{scope}[decoration={
			markings,
			mark=at position 1 with {\arrow{>}}}
		] 
		\node (U) at (0,-1) {$x$};
		\node (V) at (1,-1) {$y$};
		\node (A) at (0,-2) {$A$};
		\node (B) at (1,-2) {$B$};
		\draw[postaction=decorate] (U) to[bend left] (V);
		\draw[postaction=decorate] (V) to[bend left] (U);
		\draw[postaction=decorate] (U) -- (A);
		\draw[postaction=decorate] (V) -- (B);
	\end{scope}
\end{tikzpicture}
\]
where the ordering on the right-hand side is chosen so that
\[
\alpha_{\Gamma}
=
\alpha_{x\to y}\wedge
\alpha_{y\to x}\wedge
\alpha_{x\to A}\wedge
\alpha_{y\to B}.
\]
\end{dfn}

\begin{dfn}
Let
\[
I:\mathcal{T}_{\ZZ}\to \mathcal{S}_{\ZZ}
\]
be the map which is a homomorphism of $\mathcal{F}$-modules and also a homomorphism of $\ZZ$-algebras.  
We define $I$ by
\[
I(e)=e_1e_0.
\]
\end{dfn}
  It was shown in \cite{Rit} that the maps $G$ and $I$ are uniquely determined by these conditions.
Under these setups, Ritland proved the following proposition in \cite{Rit}.

\begin{prop}[{\cite[Proposition 4.13]{Rit}}]\label{com}
\em{For any $T\in\mathcal{T}_{\ZZ}^n$, we have
\[
c(G(T))=L^{10}(I(T)).
\]}
\end{prop}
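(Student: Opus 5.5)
The plan is to argue by structural induction on the rooted tree $T$. The base case is $T=e$ together with the empty tree, and the inductive steps are the three constructors $T_1\cdot T_2$, $\mu(T)$ and $\nu(T)$.

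\emph{Product step.} Both $G$ and $I$ are $\ZZ$-algebra and $\mathcal{F}$-module homomorphisms. Both $c$ and $L^{10}$ are $\ZZ$-algebra homomorphisms, by $c(\Gamma_1*\Gamma_2)=c(\Gamma_1)c(\Gamma_2)$ and by the theorem on $L^{10}$ in Section~2. Hence the product step is immediate: $c(G(T_1\cdot T_2))=c(G(T_1))c(G(T_2))$ and $L^{10}(I(T_1\cdot T_2))=L^{10}(I(T_1))L^{10}(I(T_2))$.

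\emph{Strengthened statement.} The difficulty is that $\mu,\nu$ do not descend to operations on numbers. Appending $e_0$ or prepending $e_1$ to a word is not a function of $L^{10}$ of that word, and likewise $c(\mu(\Gamma))$ is not a function of $c(\Gamma)$. So I would strengthen the statement to one about functions. For a graph $\Gamma$ with external vertices $A,B$, use $\mathrm{Aff}_+$ to fix $A$ and $B$ at chosen positions. Then let one of them, or an auxiliary point, vary, for instance $B'$ in $\mu(\Gamma)$ at a point $z$ of the boundary or of $\mathbb{H}$. Integrating $\alpha_\Gamma$ along the fibres of the forgetful map $\mathfrak{C}_{n+1,2}\to\mathfrak{C}_{1,2}$ gives a function $\Phi_\Gamma(z)$. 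The strengthened claim is that $\Phi_{G(T)}$ equals the hyperlogarithm (iterated integral in $\omega_0,\omega_1$) attached to the word $I(T)$, up to the same signs and powers of $2\pi i$. Its specialization at the endpoint recovers the stated identity. Concretely, attaching $B'$ with edges to $A$ and $B$ and integrating over $B'$ should amount to the operation of integrating once more against $\omega_0$. Symmetrically, attaching $A'$ should amount to integrating against $\omega_1$ at the other end. This matches $\mu(w)=we_0$ and $\nu(w)=e_1w$.

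\emph{Inductive steps for $\mu,\nu$.} To carry out these steps, I would write $\alpha_{B'\to A}\wedge\alpha_{B'\to B}$ in coordinates. In the logarithmic propagator these are $d\log$ forms whose product on the one-dimensional $B'$-fibre gives $\frac{1}{2\pi i}\,d\log$ of a cross-ratio. After Fubini (justified by absolute convergence, \cite{ARTW16}), this should be exactly one more letter of the iterated integral. For the base case, I would compute $c(G(e))$ directly by integrating out $y$ and then $x$ in the two-cycle graph. The target is $L^{10}(e_1e_0)=-\widetilde\zeta(2)=\tfrac1{24}$, with sign fixed by the chosen ordering.

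\emph{Main obstacle.} I expect the hard part to be the fibre-integration step. One must verify that the pushforward of $\alpha_\Gamma\wedge\alpha_{B'\to A}\wedge\alpha_{B'\to B}$ produces precisely the iterated integral with the new letter. This requires showing there are no stray boundary contributions from the codimension-one strata of $\mathfrak{C}$ where $B'$ collides with $A$, $B$ or internal vertices. It also requires tracking orientation signs from the orderings and from the factors $(-1)^r$ and $(2\pi i)^{-n}$ in $L^{10}$. I would handle this first: prove the compatibility for $\mu$ in full, using the explicit form of the logarithmic propagator on a half-plane fibre, and then obtain $\nu$ by the reflection symmetry $x\mapsto -\bar x$ exchanging $A$ and $B$ and $\omega_0\leftrightarrow\omega_1$, which is the duality of iterated integrals.
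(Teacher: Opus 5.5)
The paper does not prove this statement; it quotes it from Ritland~\cite{Rit}, so your plan has to stand on its own. There is a genuine gap: the one-moving-point strengthening is neither correctly formulated nor closed under $\nu$. The product step is fine, and passing to a function of a moving point is the right instinct.

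\emph{The formulation for $\mu$.} Put $A$ at $0$ and $z\in\mathbb{H}$ in place of $B$. The rescaling $z_i\mapsto\lambda z_i$, $z\mapsto\lambda z$ ($\lambda>0$) preserves every propagator, so $\Phi_\Gamma(z)$ depends only on $\beta=\arg z$. It is therefore not a hyperlogarithm in $z$ itself. Now compute $\Phi_{\mu(\Gamma)}(e^{i\beta})$ by integrating over $B'=re^{i\theta}$ (the fibre is two-dimensional, not one). You get $\alpha_{B'\to A}=\frac1\pi d\theta$, and the radial integral is $\int_0^\infty\alpha_{B'\to B}=\frac\theta\pi-\mathbf{1}_{\theta>\beta}$. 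Hence, up to sign, $\Phi_{\mu(\Gamma)}$ is the primitive of $\frac1\pi\Phi_\Gamma\,d\beta=\frac{1}{2\pi i}\Phi_\Gamma\,\frac{d\xi}{\xi}$, with $\xi=e^{2i\beta}$, normalized to have mean zero on $(0,\pi)$. The number $c(\mu(\Gamma))$ is its limit as $\beta\to0$. This agrees with $L_{we_0}(\xi)=\int_0^\xi L_w(s)\frac{ds}{s}$ only if an extra property is carried through the induction. For example, one could require that every $\Phi_{G(T)}$ is the boundary value of a holomorphic function of $\xi$ on the disc vanishing at $0$. Without such a property, $\mu$ and $*$ do not intertwine with $\shuffle$. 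Indeed, the same computation plus an integration by parts gives $c(\mu(\mu\Gamma_1*\Gamma_2))+c(\mu(\Gamma_1*\mu\Gamma_2))=c(\mu\Gamma_1)c(\mu\Gamma_2)-\frac1\pi\int_0^\pi\Phi_{\mu\Gamma_1}\Phi_{\mu\Gamma_2}\,d\beta$. In particular, your base case has to be the whole function $\Phi_{G(e)}$, not just the number $c(G(e))=\frac1{24}$.

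\emph{The decisive gap: $\nu$.} A hypothesis with one moving point is not closed under the tree operations. Take $\mu(\nu(e))$, i.e.\ the word $e_1e_1e_0e_0$. For it you need the $B$-moving function of $\nu(G(e))$. That function is an integral over $A'$ of $\alpha_{A'\to A}\wedge\alpha_{A'\to z}$ against $G(e)$ evaluated with \emph{both} of its external vertices at interior points $A',z\in\mathbb{H}$. The reflection $x\mapsto-\bar x$ turns $\nu$ into $\mu$, but it also swaps which external vertex is moving, so it cannot supply this. On the word side, similarly, prepending $e_1$ acts at the lower endpoint of the iterated integral, not at the moving one.

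A working induction would need to be formulated for the two-point function $F_\Gamma(a,b)$, with $a,b\in\mathbb{H}$ modulo $\mathrm{Aff}_+$. This must be matched with iterated integrals whose two endpoints both vary. It also needs an explicit base case $F_{G(e)}(a,b)$, and an evaluation of the $\mu$- and $\nu$-fibre integrals when the other external point lies off the real line. That analytic input is the actual content of the proposition, and it is absent from your plan. By contrast, the boundary-strata worry you flag does not arise in a Fubini computation.
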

As a consequence, the computation of integrals in $\mathcal{K}_{\ZZ}$ can be reduced to purely algebraic computations in $\mathcal{S}_{\ZZ}$.  
Ritland proved the following two theorems.

\begin{thm}[{\cite[Theorem 5.8]{Rit}}]\label{wg}
\em{For any $n\ge 0$, we have
\[
c(\mathcal{K}_{\QQ}^n)={\widetilde{\mathcal{Z}}^{\flat,n}_{\QQ}}.
\]
where, $\mathcal{K}_{\QQ}^n:=\mathcal{K}_{\ZZ}^n\otimes \QQ\,,\quad{\widetilde{\mathcal{Z}}^{\flat,n}_{\QQ}}:={\widetilde{\mathcal{Z}}^{\flat,n}_{\ZZ}}\otimes\QQ$.}
\end{thm}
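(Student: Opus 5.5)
The inclusion $c(\mathcal{K}_{\QQ}^n)\subset\widetilde{\mathcal{Z}}^{\flat,n}_{\QQ}$ follows by tensoring with $\QQ$ the result of \cite{BPP} recalled in Section~3: $c_\Gamma\in\widetilde{\mathcal{Z}}^{\flat,n}_{\ZZ}$ for every Kontsevich graph $\Gamma$ of weight $n$. The work is in the reverse inclusion, which I would prove by induction on $n$. Over $\QQ$ we have $\widetilde{\mathcal{Z}}^{\flat,n}_{\QQ}=\widetilde{\mathcal{Z}}^{n}_{\QQ}+\widetilde{\mathcal{Z}}^{n-1}_{\QQ}$, so I treat the weight $n$ and weight $n-1$ parts separately.

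\emph{Weight $n-1$ part.} Let $\Gamma_1$ be the unique weight-one graph, with internal vertex $x$ and edges $x\to A$, $x\to B$. First I compute $c(\Gamma_1)=\int_{\mathfrak{C}_{1,2}}\alpha_{x\to A}\wedge\alpha_{x\to B}$. It should be $\pm\tfrac12$, and over $\QQ$ only its non-vanishing matters. Since $c$ is multiplicative for $*$, we get
\[
c(\Gamma_1*\mathcal{K}^{n-1}_{\QQ})=c(\Gamma_1)\,c(\mathcal{K}^{n-1}_{\QQ}).
\]
By the induction hypothesis this contains $\widetilde{\mathcal{Z}}^{n-1}_{\QQ}$.

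\emph{Weight $n$ part.} By Proposition~\ref{com} and $L^{10}(\mathcal{S}^n_{\ZZ})=\widetilde{\mathcal{Z}}^n_{\ZZ}$, it suffices to show that $I(\mathcal{T}^n_{\QQ})=\mathcal{S}^n_{\QQ}$. Indeed, then every element of $\widetilde{\mathcal{Z}}^n_{\QQ}$ has the form $L^{10}(I(T))=c(G(T))$ with $G(T)\in\mathcal{K}^n_{\QQ}$. The image of $I$ is the smallest subspace containing $1$ and $e_1e_0$ that is closed under $\omega\mapsto\omega e_0$, under $\omega\mapsto e_1\omega$, and under shuffle products, with the weight grading matching word length.

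I would argue as follows. With the order $e_1<e_0$, every Lyndon word of length $\ge2$ begins with $e_1$ and ends with $e_0$. By Radford's theorem (over $\QQ$), $\mathcal{S}_{\QQ}$ is then the shuffle polynomial algebra on the Lyndon words of length $\ge2$. So it suffices to show that each admissible word $w=e_1ue_0$ lies in $I(\mathcal{T}_{\QQ})$, and for this I would run a nested induction: outer on $|w|$, inner on a filtration of words of fixed length.
\begin{itemize}
\item If $u$ starts with $e_1$, then $w=\nu(e_1u'e_0)$, where $e_1u'e_0$ is shorter and admissible, so the outer induction applies.
\item If $u$ ends with $e_0$, then $w=\mu(\cdot)$ of a shorter admissible word, and again the outer induction applies.
\item The remaining words have the form $w=e_1e_0\,v\,e_1e_0$. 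For these I would use the shuffle $(e_1e_0)\shuffle(e_1 v' e_0)$ for a suitable shorter admissible $e_1v'e_0$, chosen so that $w$ occurs with nonzero coefficient. Every other term should either be of the two previous kinds or be smaller than $w$ in the chosen filtration, for example lexicographic order or the length of the initial $e_1e_0$-block.
\end{itemize}

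The main obstacle is this last triangularity step. One has to choose the order and the factorization so that $w$ really is the leading term with an invertible coefficient; such coefficients are typically small positive integers, which is why the argument works over $\QQ$ but not over $\ZZ$. One also has to check that every other term of the shuffle is either of the form $\mu(\cdot)$ or $\nu(\cdot)$ of an admissible word, or strictly lower in the order. If the direct triangular argument becomes unwieldy, I would fall back on the Lyndon-basis formulation: show that each Lyndon word is congruent, modulo products of shorter admissible words and words of the form $\mu(\cdot)$ or $\nu(\cdot)$, to a nonzero rational multiple of an element of the image.
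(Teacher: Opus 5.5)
Your upper inclusion (from \cite{BPP}) and your treatment of the weight-$(n-1)$ part via $\Gamma_1*\mathcal{K}^{n-1}_{\QQ}$ are fine. The latter is the paper's Lemma~\ref{1/2}, though you should cite \cite[Lemma 5.3]{BPP} for $c(\Gamma_1)=-\tfrac12$ rather than say it ``should be'' $\pm\tfrac12$. The gap is in the weight-$n$ part. The third bullet, which you yourself flag as the main obstacle, is not proved, and in the form you propose it cannot work. For example, $w=e_1e_0e_0e_1e_1e_0$ has the form $e_1e_0ve_1e_0$. Yet it occurs in $(e_1e_0)\shuffle(e_1v'e_0)$ for no admissible $e_1v'e_0$: deleting an $e_1$ and a later $e_0$ from $w$ always leaves a word that begins with $e_0$ or ends with $e_1$. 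So your triangular scheme would need other factorizations. The nonvanishing of the leading coefficient and the control of all remaining terms are left entirely open.

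The missing idea is that you never need to treat words $e_1e_0ve_1e_0$ at all. After invoking Radford, your reduction should be to Lyndon words, not to all admissible words as you wrote. A Lyndon word of length $\ge3$ cannot both begin and end with $e_1e_0$. If it did, its proper suffix $e_1e_0$ would also be a prefix, hence lexicographically smaller than the word, whereas a Lyndon word is strictly smaller than each of its proper suffixes. So every Lyndon word of length $\ge3$ begins with $e_1e_1$ or ends with $e_0e_0$; this is Lemma~\ref{L3}, i.e.\ \cite[Proposition 5.7]{Rit}. Such a word is therefore $\nu(\ell')$ or $\mu(\ell')$ with $\ell'$ admissible of length one less. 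The induction then closes. Assume every admissible word of length $<m$ lies in $I(\mathcal{T}_{\QQ})$. Then every Lyndon word of length $m$ does as well; for $m=2$ the only one is $e_1e_0=I(e)$. Radford's decomposition, which is homogeneous in length, writes any admissible word of length $m$ as a polynomial in Lyndon words of length $\le m$, so that word lies in $I(\mathcal{T}^m_{\QQ})$. This is exactly the paper's route through Proposition~\ref{Lyshu}, Lemma~\ref{L3}, Proposition~\ref{nijoudeosaeru} and Corollary~\ref{qdeseisei}. The paper works over $\ZZ$ with an explicit factorial denominator, which is irrelevant over $\QQ$.
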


\begin{thm}[{\cite[Theorem 5.9]{Rit}}]\label{ag}
\em{The integration map
\[
c:\mathcal{K}_{\ZZ}\to{\widetilde{\mathcal{Z}}^{\flat}_{\ZZ}}
\]
is surjective. That is,
\[
c(\mathcal{K}_{\ZZ})={\widetilde{\mathcal{Z}}^{\flat}_{\ZZ}}.
\]}
\end{thm}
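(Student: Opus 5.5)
The inclusion $c(\mathcal{K}_{\ZZ})\subset\widetilde{\mathcal{Z}}^{\flat}_{\ZZ}$ is the result of \cite{BPP} recalled in Section~3, so the work is surjectivity. I would split it into two parts: first show $\widetilde{\mathcal{Z}}^n_{\ZZ}\subset c(\mathcal{K}_{\ZZ})$ for every $n$, then obtain the half-integral part $\frac12\widetilde{\mathcal{Z}}^{n-1}_{\ZZ}$ from the multiplicativity of $c$. For the second part, take the weight-one graph $\Gamma_1$ with edges $x\to A$ and $x\to B$. A direct computation of the angle integral over $\mathfrak{C}_{1,2}$ should give $c(\Gamma_1)=\pm\frac12$, so after fixing the ordering $c(\Gamma_1)=\frac12$. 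Since $c$ is a ring homomorphism for $*$, if $\xi=c(K)\in\widetilde{\mathcal{Z}}^{n-1}_{\ZZ}$ then $\frac12\xi=c(\Gamma_1*K)$. Hence the first part implies the theorem.

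For the first part I would use Proposition~\ref{com}, which gives $c(G(T))=L^{10}(I(T))$. It then suffices to show that $L^{10}(I(\mathcal{T}^n_{\ZZ}))=\widetilde{\mathcal{Z}}^n_{\ZZ}$. Equivalently, every basis word $e_1e_0^{k_1-1}\cdots e_1e_0^{k_r-1}$ of $\mathcal{S}^n_{\ZZ}$ should be congruent modulo $\ker L^{10}$ to a $\ZZ$-combination of words $I(T)$. The image of $I$ is generated from $e_1e_0$ by three operations: right multiplication by $e_0$ (this is $\mu$), left multiplication by $e_1$ (this is $\nu$), and the shuffle product. The words $e_1^a e_0^b$ are immediate, since $e_1^a e_0^b=I(\nu^{a-1}\mu^{b-1}(e))$. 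For general words I would induct on depth and on a suitable ordering of words. For a word $w=e_1u\,e_0$, I would look for a combination built from $\mu$, $\nu$ and shuffles with shorter words of the form $e_1e_0^{j}$ in which $w$ appears with coefficient $\pm1$. All other terms should then be of lower depth or earlier in the ordering. The model is the operator $H_k$ of Theorem~\ref{rafunathm}: its second-difference shape
\[
a\shuffle b\,e_0^{k}-2\,(a\shuffle b\,e_0^{k-1})e_0+(a\shuffle b\,e_0^{k-2})e_0^2
\]
is designed so that the unwanted shuffle terms telescope away, leaving essentially a single concatenated word.

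The main obstacle is integrality. Naive shuffles introduce even coefficients; for example $e_1e_0\shuffle e_1e_0=2e_1e_0e_1e_0+4e_1e_1e_0e_0$, so $e_1e_0e_1e_0$ is not obtained directly with coefficient one. I would first try to show that the $H$-type difference operators, together with $\nu$, reach every word of $\mathcal{S}_{\ZZ}$ with a unit leading coefficient. If some words resist this, I would fall back on integral relations in $\ker L^{10}$, such as duality and the shuffle relations, to rewrite them in terms of words that can be reached. Verifying that these cancellations really leave $\pm1$ leading coefficients, and not merely an element of $2\mathcal{S}_{\ZZ}$, is the step I expect to need the most care.
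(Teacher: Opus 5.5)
Your half-integral step, joining with the weight-one graph $\Gamma_w$ (where $c(\Gamma_w)=-\frac12$), is correct and is exactly Lemma~\ref{1/2}. The gap is in the first part. There you reduce to $L^{10}(I(\mathcal T^n_\ZZ))=\widetilde{\mathcal Z}^n_\ZZ$, that is, to realizing every normalized MZV integrally \emph{in its own weight}. That claim implies $\widetilde{\mathcal Z}^n_\ZZ\subset c(\mathcal K^n_\ZZ)$, which is the conjecture of \cite{BPP} recalled in the introduction. Only its rational version (Theorem~\ref{wg}) is proved, and Theorem~1.2 of this paper reaches integrality only after raising the weight by $4N(m)$.

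Your proposed mechanism already fails in weight $4$. There $\mathcal T^4_\ZZ$ is spanned by $e\cdot e,\ \mu^2(e),\ \mu\nu(e),\ \nu\mu(e),\ \nu^2(e)$. So $I(\mathcal T^4_\ZZ)$ is spanned by $e_1e_0^3$, $e_1^2e_0^2$, $e_1^3e_0$ and $2e_1e_0e_1e_0+4e_1^2e_0^2$, and every element of it has even coefficient on $e_1e_0e_1e_0$. No combination of $\mu$, $\nu$, shuffles or $H$-operators can produce that word with a unit leading coefficient. The telescoping in Lemma~\ref{H} needs the shuffled factor to be $\mu^2(v)$, which is why the $H_k$ only handle indices whose non-final entries are $\ge 4$ (Corollary~\ref{mzge4}). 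In weight $4$ you are rescued by the integral relation $\widetilde\zeta(2,2)=3\widetilde\zeta(1,3)$. In general, however, your fallback ``use integral relations in $\ker L^{10}$'' is where the whole difficulty lies, and you give no argument that it succeeds.

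The missing idea is that surjectivity onto the \emph{ungraded} ring $\widetilde{\mathcal Z}^{\flat}_\ZZ$ lets you clear denominators multiplicatively, at the cost of weight, because $c$ is a ring homomorphism. Let $\mathbf k$ be an index of weight $m$.
\begin{enumerate}
\item Proposition~\ref{nijoudeosaeru} (Lyndon decomposition plus Lemma~\ref{L3}) gives $\Gamma\in\mathcal K^m_\ZZ$ with $c(\Gamma)=\pm M\,\widetilde\zeta(\mathbf k)$, where $M=\lfloor m(m+1)/4\rfloor!$.
\item The $H_k$ graphs give $\Gamma'\in\mathcal K^{4n}_\ZZ$ with $c(\Gamma')=\pm\widetilde\zeta(\{4\}_n)=\pm 1/(2^{2n-1}(4n+2)!)$, by Corollary~\ref{mzge4} and Proposition~\ref{444}.
\item If $4n+2\ge\lfloor m(m+1)/4\rfloor$, then $M\mid(4n+2)!$ and $\widetilde\zeta(\mathbf k)=\pm\frac{2^{2n-1}(4n+2)!}{M}\,c(\Gamma*\Gamma')$, so $\widetilde\zeta(\mathbf k)\in c(\mathcal K^{m+4n}_\ZZ)$.
\end{enumerate}
This is the argument behind Theorem~1.2. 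Combined with your half-integral step and the inclusion $c(\mathcal K_\ZZ)\subset\widetilde{\mathcal Z}^{\flat}_\ZZ$ from \cite{BPP}, it yields Theorem~\ref{ag} without any integral, weight-preserving statement.
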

In \cite{Rit}, Theorem~\ref{wg} is proved by means of Corollary~\ref{qdeseisei} stated below. In this paper, we provide a refinement of these results in Section~6.
\section{An explicit construction of Kontsevich graphs}
In this section, we prove Theorem~\ref{rafunathm}. We begin by briefly explaining the motivation for the theorem, and then introduce the tools required for the proof. Finally, we proceed to the proof itself.

By Proposition~\ref{com}, for any word $\omega\in \mathcal{S}_{\ZZ}^n$, if there exists
$T\in\mathcal{T}_{\ZZ}^n$ such that $I(T)=\omega$, then we obtain
\[
L^{10}(\omega)=c(G(T)),
\]
and hence
\[
L^{10}(\omega)\in c(\mathcal{K}_{\ZZ}^n).
\]
In \cite{Rit}, Ritland showed that such an element $T$ indeed exists in
$\mathcal{T}_{\QQ}^n$.

However, the map $I$ is not injective. In fact, for
\begin{align*}
T_1&=\mu(e\cdot\mu^2(e))-2\mu^{3}(e\cdot\mu(e))+\mu^{4}(e\cdot e),\\
T_2&=\frac{1}{2}\bigl(\mu^{2}(e)\cdot \mu^{2}(e)\bigr)
      -4\mu^{3}(e\cdot\mu(e))-3\mu^{4}(e\cdot e),
\end{align*}
we have
\[
I(T_1)=I(T_2).
\]
Here, $T_1\in \mathcal{T}_{\ZZ}^8$, whereas
$T_2\in \mathcal{T}_{\QQ}^8\setminus\mathcal{T}_{\ZZ}^8$.
Consequently, from $T_1$ we obtain
\[
\widetilde\zeta(4,4)\in c(\mathcal{K}_{\ZZ}^8),
\]
while from $T_2$ we cannot obtain
\[
\widetilde\zeta(4,4)\in c(\mathcal{K}_{\ZZ}^8).
\]
Thus, by explicitly constructing a more suitable element
$T\in\mathcal{T}_{\QQ}^n$
for a given word $\omega\in\mathcal{S}_{\ZZ}^{n}$, one can obtain more refined information on
$c(\mathcal{K}_{\ZZ}^n)$ and $c(\mathcal{K}_{\ZZ})$.
Theorem~\ref{rafunathm} is obtained by providing an explicit construction of an element
$T\in\mathcal{T}_{\ZZ}^n$
for a specific word, and will subsequently yield Corollary~\ref{mzge4}.

Let $T_1,T_2\in \mathcal{T}_{\ZZ}$. 
We define
\[
H(T_1,T_2)
:=T_1\cdot \mu^2(T_2)
-2\,\mu\bigl(T_1\cdot \mu(T_2)\bigr)
+\mu^2(T_1\cdot T_2).
\]
By extending linearly, this defines a bilinear map
\[
H:\mathcal{T}_{\ZZ}\times \mathcal{T}_{\ZZ}\to \mathcal{T}_{\ZZ}.
\]
For $k\ge2$, we write
\[
H_k(T):=H\bigl(e,\mu^{k-2}(T)\bigr).
\]
Similarly, we define $H$ and $H_k$ on $\mathcal{S}_{\ZZ}$ and $\mathcal{K}_{\ZZ}$.

By the definition of $H$ and Proposition~\ref{com}, the maps $G$ and $H$, as well as $I$ and $H$, commute with each other.

\begin{rem}\label{Hgra}
For Kontsevich graphs $\Gamma_1,\Gamma_2$, the element $H(\Gamma_1,\Gamma_2)$ can be depicted as follows:
\[
H(\Gamma_1,\Gamma_2)
=
\left(
\begin{tikzpicture}[baseline=(current bounding box.center)]
	\begin{scope}[decoration={
			markings,
			mark=at position 0.6 with {\arrow{>}}}
		] 
		\node (A) at (0,0) {$A$};
		\node (B) at (2,0) {$B$};
		\node (G) at (0,2.4){$\Gamma_1$};
        \node (F) at (2,2.4){$\Gamma_2$};
        \node (x) at (2,1.6){$\bullet$};
        \node (y) at (2,0.8){$\bullet$};
		\draw[postaction=decorate,dashed] (G) to (A);
		\draw[postaction=decorate,dashed] (G) to (B);
        \draw[postaction=decorate,dashed] (F) to (A);
		\draw[postaction=decorate,dashed] (F) to (x);
        \draw[postaction=decorate] (x) to (A);
		\draw[postaction=decorate] (x) to (y);
        \draw[postaction=decorate] (y) to (A);
		\draw[postaction=decorate] (y) to (B);
	\end{scope}
\end{tikzpicture}
\right)
-2
\left(
\begin{tikzpicture}[baseline=(current bounding box.center)]
	\begin{scope}[decoration={
			markings,
			mark=at position 0.5 with {\arrow{>}}}
		] 
		\node (A) at (0,0) {$A$};
		\node (B) at (2,0) {$B$};
		\node (G) at (0,2.4){$\Gamma_1$};
        \node (F) at (2,2.4){$\Gamma_2$};
        \node (x) at (2,1.6){$\bullet$};
        \node (y) at (2,0.8){$\bullet$};
		\draw[postaction=decorate,dashed] (G) to (A);
		\draw[postaction=decorate,dashed] (G) to (y);
        \draw[postaction=decorate,dashed] (F) to (A);
		\draw[postaction=decorate,dashed] (F) to (x);
        \draw[postaction=decorate] (x) to (A);
		\draw[postaction=decorate] (x) to (y);
        \draw[postaction=decorate] (y) to (A);
		\draw[postaction=decorate] (y) to (B);
	\end{scope}
\end{tikzpicture}
\right)
+
\left(
\begin{tikzpicture}[baseline=(current bounding box.center)]
	\begin{scope}[decoration={
			markings,
			mark=at position 0.6 with {\arrow{>}}}
		] 
		\node (A) at (0,0) {$A$};
		\node (B) at (2,0) {$B$};
		\node (G) at (0,2.4){$\Gamma_1$};
        \node (F) at (2,2.4){$\Gamma_2$};
        \node (x) at (2,1.6){$\bullet$};
        \node (y) at (2,0.8){$\bullet$};
		\draw[postaction=decorate,dashed] (G) to (A);
		\draw[postaction=decorate,dashed] (G) to (x);
        \draw[postaction=decorate,dashed] (F) to (A);
		\draw[postaction=decorate,dashed] (F) to (x);
        \draw[postaction=decorate] (x) to (A);
		\draw[postaction=decorate] (x) to (y);
        \draw[postaction=decorate] (y) to (A);
		\draw[postaction=decorate] (y) to (B);
	\end{scope}
\end{tikzpicture}
\right).
\]
\end{rem}

For the operator $H$ on $\mathcal{S}_{\ZZ}$, the following lemma holds.

\begin{lem}\label{H}
\em{Let $\omega=\omega' b e_0$ with $b\in\{e_0,e_1\}$. Then
\[
H(\omega,v)=(\omega'\shuffle \mu^2(v))\, b e_0
\]
holds. Here $\omega'$ denotes an arbitrary word, possibly the empty word.}
\end{lem}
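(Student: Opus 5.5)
The plan is a direct computation in $\mathbb{Z}\langle e_0,e_1\rangle$. On $\mathcal{S}_{\ZZ}$ the product is the shuffle $\shuffle$, and $\mu(w)=we_0$. For a word $\omega=\omega'be_0$ and a nonempty word $v$ the definition reads
\[
H(\omega,v)=\omega\shuffle ve_0e_0-2\,(\omega\shuffle ve_0)e_0+(\omega\shuffle v)e_0e_0 .
\]
Both sides of the claimed identity are linear in $v$, so it suffices to take $v$ a nonempty word. I will assume this throughout: for $v=1$ we have $\mu^2(1)=1$ and the formula fails. All computations can be done in $\mathbb{Z}\langle e_0,e_1\rangle$, since $\mathcal{S}_{\ZZ}$ is a subalgebra.

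The one tool needed is the right-end recursion for the shuffle. For words $w,u$ and letters $x,y$,
\[
wx\shuffle uy=(w\shuffle uy)\,x+(wx\shuffle u)\,y .
\]
The paper defines $\shuffle$ by peeling off first letters. I will derive this version from the definition by noting that word reversal is an anti-automorphism compatible with $\shuffle$ (both describe the same set of order-preserving interleavings). Alternatively, a short induction on $|w|+|u|$ gives it directly.

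Now put $W=\omega'b$, so $\omega=We_0$. The steps are as follows.
\begin{enumerate}
\item Apply the recursion to the first term:
\[
We_0\shuffle ve_0e_0=(W\shuffle ve_0e_0)e_0+(We_0\shuffle ve_0)e_0 .
\]
Its second summand cancels one copy of the middle term. This leaves
\[
H=(W\shuffle ve_0e_0)e_0-(We_0\shuffle ve_0)e_0+(We_0\shuffle v)e_0e_0 .
\]
\item Expand $We_0\shuffle ve_0=(W\shuffle ve_0)e_0+(We_0\shuffle v)e_0$. The last term then cancels, giving
\[
H=(W\shuffle ve_0e_0)e_0-(W\shuffle ve_0)e_0e_0 .
\]
\item Peel off the letter $b$ from $W=\omega'b$:
\[
W\shuffle ve_0e_0=(\omega'\shuffle ve_0e_0)b+(W\shuffle ve_0)e_0 .
\]
Multiplying on the right by $e_0$, the second summand cancels the remaining negative term. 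What is left is $(\omega'\shuffle\mu^2(v))be_0$, as claimed.
\end{enumerate}

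The argument is essentially bookkeeping. The only step needing care is justifying the right-end recursion, which is not the defining one. I would prove it first as a small auxiliary statement, by induction on total length using the left-end definition. The edge cases where $\omega'$ is empty are harmless, because $1\shuffle u=u$ keeps every identity above valid.
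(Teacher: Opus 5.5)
Your proof is correct and is essentially the paper's argument: the paper also applies the last-letter recursion $wx\shuffle uy=(w\shuffle uy)x+(wx\shuffle u)y$ three times (to $\omega\shuffle\mu^2(v)$, to $\omega'b\shuffle\mu^2(v)$, and to $\omega\shuffle\mu(v)$) and lets the terms cancel, only in a slightly different order, and it uses that recursion without comment even though $\shuffle$ is defined by first letters. Your caveat that $v$ must have no constant term, since $\mu(1)=1$ makes the identity fail at $v=1$, is a genuine restriction that the paper leaves implicit.
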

\begin{proof}
By the definition of the shuffle product, we have
\begin{align*}
\omega \shuffle \mu^{2}(v)
&= (\omega' b \shuffle \mu^{2}(v)) e_0 + \mu(\omega \shuffle \mu(v)) \\
&= (\omega' \shuffle \mu^{2}(v)) b e_0
  + (\omega' b \shuffle \mu(v)) e_0 e_0
  + \mu(\omega \shuffle \mu(v)).
\end{align*}
On the other hand, we have
\[
(\omega \shuffle \mu(v)) e_0
= (\omega' b \shuffle \mu(v)) e_0 e_0
  + \mu^{2}(\omega \shuffle v).
\]
Hence,
\[
(\omega' b \shuffle \mu(v)) e_0 e_0
= \mu(\omega \shuffle \mu(v)) - \mu^{2}(\omega \shuffle v).
\]
Substituting this into the previous expression and simplifying, we obtain
\[
H(\omega,v) = (\omega' \shuffle \mu^{2}(v)) b e_0,
\]
as desired.
\end{proof}

By Lemma~\ref{H}, we immediately obtain the following result.

\begin{prop}\label{Hk}
\em{For any $v\in \mathcal{S}_{\ZZ}$, we have
\[
H_k(v)=\mu^{k}(v)e_1 e_0.
\]}
\end{prop}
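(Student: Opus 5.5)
The plan is to obtain the identity directly from Lemma~\ref{H}, applied with the first argument equal to $I(e)=e_1e_0$. By definition,
\[
H_k(v)=H\bigl(e_1e_0,\ \mu^{k-2}(v)\bigr),
\]
where $e_1e_0$ is the image of the tree $e$ in $\mathcal{S}_{\ZZ}$. Both sides of the claimed identity are $\ZZ$-linear in $v$: the map $H$ is bilinear, and $\mu$ and right multiplication by $e_1e_0$ are linear. It therefore suffices to treat a single word $v$, and the argument below is uniform in $v$.

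Write $\omega=e_1e_0=\omega' b e_0$ with $\omega'=1$, the empty word, and $b=e_1$. This is exactly the shape required in Lemma~\ref{H}, which explicitly allows $\omega'$ to be empty. Applying the lemma with second argument $w:=\mu^{k-2}(v)$ gives
\[
H(e_1e_0,w)=\bigl(1\shuffle \mu^2(w)\bigr)\,e_1e_0=\mu^2(w)\,e_1e_0,
\]
since the empty word is the unit for $\shuffle$ by definition. Substituting $w=\mu^{k-2}(v)$ yields $\mu^2(\mu^{k-2}(v))=\mu^k(v)$, because $\mu$ is an operator of the $\mathcal{F}$-module $\mathcal{S}_{\ZZ}$. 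This gives $H_k(v)=\mu^k(v)e_1e_0$.

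The remaining care point is that Lemma~\ref{H} must be applied with the same $\mathcal{F}$-module operator $\mu$ on $\mathcal{S}_{\ZZ}$ that appears in the definition of $H$ and $H_k$. Also, $k\ge 2$ is needed so that $\mu^{k-2}$ makes sense. No case split is required: the lemma holds for every $v$, and composing powers of $\mu$ is just the module action, so the identity holds for all $v\in\mathcal{S}_{\ZZ}$ by linearity. I do not expect a genuine obstacle. The only step to check carefully is that $\omega'=1$ is permitted in Lemma~\ref{H} and that $1\shuffle x=x$, both of which are built into the definitions.
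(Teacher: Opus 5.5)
Your derivation is the same one-line specialization of Lemma~\ref{H} (with $\omega'=1$, $b=e_1$) that the paper uses, and it is correct whenever $v$ is a $\ZZ$-combination of nonempty words. However, your claims that ``no case split is required'' and that the lemma ``holds for every $v$'' fail at the empty word.

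The cause is the convention $\mu(1)=1$. The proof of Lemma~\ref{H} peels the last letter off $\mu^2(v)=\mu(v)e_0$ and uses $\mu(v)=ve_0$, and both identities are false for $v=1$. Concretely, for every $k\ge 2$,
\[
H_k(1)=H(e_1e_0,1)=e_1e_0-2\,e_1e_0e_0+e_1e_0e_0e_0\neq e_1e_0=\mu^k(1)\,e_1e_0 .
\]
So Lemma~\ref{H} and the proposition, as literally stated for all $v\in\mathcal{S}_{\ZZ}$, are both false for any $v$ with nonzero constant term. This defect is already in the paper's statement. Your linearity reduction to ``a single word'' includes exactly this bad case, since the empty word counts as a word.

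The repair is to state and prove the identity only for $v\in e_1\ZZ\langle e_0,e_1\rangle e_0$. There your argument goes through verbatim. This restricted version is all that Theorem~\ref{ge4} needs: the iteration starts at $e_1e_0$, and $v\mapsto \mu^k(v)e_1e_0$ maps this submodule into itself.
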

From Proposition~\ref{Hk} and Proposition~\ref{com}, we can deduce Theorem~1.1.
\begin{thm}\label{ge4}
\emph{Let $k_1,\dots,k_r \ge 2$ be natural numbers. Define $H_{(k_1,\dots,k_r)} := H_{k_r} \circ \cdots \circ H_{k_1}$. Then
\[
c\bigl(H_{(k_1,\dots,k_r)}(G(e))\bigr)
= (-1)^{r+1}\widetilde\zeta(k_1+2,\dots,k_r+2,2).
\]
}
\end{thm}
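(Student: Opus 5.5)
The plan is to move the whole computation from graphs to words, using Proposition~\ref{com}, and then read off the answer with Proposition~\ref{Hk} and the definition of $L^{10}$.

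\emph{Step 1 (lift to trees).} Write $T_{(k_1,\dots,k_r)}:=H_{k_r}\circ\cdots\circ H_{k_1}(e)\in\mathcal{T}_{\ZZ}$, where $H_k$ on trees is given by the same formula $H(e,\mu^{k-2}(\cdot))$. The map $G$ is both a $\ZZ$-algebra homomorphism and an $\mathcal{F}$-module homomorphism. Hence $G(H(T_1,T_2))=H(G(T_1),G(T_2))$, and in particular $G(H_k(T))=H_k(G(T))$. Inducting on $r$ then gives
\[
H_{(k_1,\dots,k_r)}(G(e))=G\bigl(T_{(k_1,\dots,k_r)}\bigr).
\]
By Proposition~\ref{com},
\[
c\bigl(H_{(k_1,\dots,k_r)}(G(e))\bigr)=L^{10}\bigl(I(T_{(k_1,\dots,k_r)})\bigr).
\]
The same homomorphism argument for $I$ gives $I(T_{(k_1,\dots,k_r)})=H_{k_r}\circ\cdots\circ H_{k_1}(e_1e_0)$, now computed in $\mathcal{S}_{\ZZ}$.

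\emph{Step 2 (compute the word).} I claim that
\[
H_{k_r}\circ\cdots\circ H_{k_1}(e_1e_0)=e_1e_0^{k_1+1}e_1e_0^{k_2+1}\cdots e_1e_0^{k_r+1}e_1e_0 .
\]
The proof is by induction on $r$ using Proposition~\ref{Hk}, which says $H_k(v)=\mu^k(v)e_1e_0=v\,e_0^k e_1e_0$ for $v\in\mathcal{S}_{\ZZ}$.
\begin{itemize}
\item For $r=1$: $H_{k_1}(e_1e_0)=e_1e_0\,e_0^{k_1}e_1e_0=e_1e_0^{k_1+1}e_1e_0$.
\item Inductive step: if $v$ ends in $e_1e_0$, then $H_{k}(v)=v\,e_0^{k}e_1e_0$. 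This turns the final block $e_1e_0$ into $e_1e_0^{k+1}$ and appends a new final block $e_1e_0$.
\end{itemize}
All intermediate words lie in $\mathcal{S}_{\ZZ}$, since each begins with $e_1$ and ends with $e_0$, so Proposition~\ref{Hk} applies at every stage.

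\emph{Step 3 (apply $L^{10}$).} The word from Step~2 has $r+1$ blocks of the form $e_1e_0^{k-1}$, with exponents $k_1+2,\dots,k_r+2,2$. The last entry is $2>1$, so the index is admissible. By definition,
\[
L^{10}\bigl(e_1e_0^{k_1+1}\cdots e_1e_0^{k_r+1}e_1e_0\bigr)=(-1)^{r+1}\widetilde\zeta(k_1+2,\dots,k_r+2,2),
\]
which is the claimed formula.

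\emph{Main obstacle.} The only delicate point is Step~1: making sure that the orderings and signs defining $G$ on $*$, $\mu$ and $\nu$ agree with the ones used for $H$ on $\mathcal{K}_{\ZZ}$. Then $G\circ H_k=H_k\circ G$ holds exactly, with no stray signs. The paper already asserts this commutation right after the definition of $H$, so I would simply invoke that remark together with Proposition~\ref{com}. Everything after that is the routine word computation above.
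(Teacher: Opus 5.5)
Your proposal is correct and follows the paper's proof: commute $H_k$ past $G$ and $I$, apply Proposition~\ref{com}, use Proposition~\ref{Hk} to get the word $e_1e_0^{k_1+1}\cdots e_1e_0^{k_r+1}e_1e_0$, and read off $L^{10}$. The sign issue you flag is not an obstacle, since $H$ on $\mathcal{K}_{\ZZ}$ is built only from $*$ and $\mu$ and $G$ is a homomorphism for both; your explicit induction on $r$ and the admissibility check fill in what the paper states without comment.
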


\begin{proof}
By Propositions~\ref{Hk} and~\ref{com}, we compute
\begin{align*}
c\bigl((H_{k_r}\circ\cdots\circ H_{k_1})(G(e))\bigr)
&= c\circ G\bigl((H_{k_r}\circ\cdots\circ H_{k_1})(e)\bigr) \\
&= L^{10}\circ I\bigl((H_{k_r}\circ\cdots\circ H_{k_1})(e)\bigr) \\
&= L^{10}\bigl((H_{k_r}\circ\cdots\circ H_{k_1})(e_1e_0)\bigr) \\
&= L^{10}\bigl(e_1 e_0^{k_1+1}\cdots e_0^{k_r+1} e_1 e_0\bigr) \\
&= (-1)^{r+1}\widetilde{\zeta}(k_1+2,\dots,k_r+2,2).
\end{align*}
This completes the proof.
\end{proof}

Moreover, by the definition of $H_k$, we have
\[
H_k(\mathcal{K}_{\ZZ}^n)\subset \mathcal{K}_{\ZZ}^{n+k+2}.
\]
Therefore, Theorem~\ref{ge4} immediately implies the following corollary.

\begin{cor}\em{\label{mzge4}
Let $k_r\ge 2$ and $k_1,\dots,k_{r-1}\ge 4$ be natural numbers. Then
\[
\widetilde\zeta(k_1,\dots,k_r)
\in c\bigl(\mathcal{K}_{\ZZ}^{k_1+\cdots+k_r}\bigr).
\]}
\end{cor}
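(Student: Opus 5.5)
The corollary splits into two cases according to the depth $r$, and both reduce to results already in hand.

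\emph{Case $r\ge 2$.} Set $k_i':=k_i-2$ for $1\le i\le r-1$. The hypothesis $k_i\ge 4$ gives $k_i'\ge 2$, so the operators $H_{k_i'}$ are defined. Theorem~\ref{ge4}, applied to the depth-$(r-1)$ tuple $(k_1',\dots,k_{r-1}')$, yields
\[
c\bigl(H_{(k_1',\dots,k_{r-1}')}(G(e))\bigr)=(-1)^{r}\widetilde\zeta(k_1,\dots,k_{r-1},2).
\]
This settles the case $k_r=2$, since $\widetilde\zeta(k_1,\dots,k_{r-1},2)$ is then, up to sign, the integral of a $\ZZ$-combination of graphs. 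For the weight bookkeeping:
\begin{itemize}
\item $G(e)\in\mathcal{K}_{\ZZ}^2$;
\item $H_k(\mathcal{K}_{\ZZ}^n)\subset\mathcal{K}_{\ZZ}^{n+k+2}$, so each $H_{k_i'}$ adds $k_i'+2=k_i$ to the weight.
\end{itemize}
Hence the element has weight $2+k_1+\cdots+k_{r-1}$, which is the weight of the zeta value, as required.

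For general $k_r\ge 2$, I would replace the starting tree $e$ by $\mu^{k_r-2}(e)$. Since $I$ is a homomorphism of $\mathcal{F}$-modules, $I(\mu^{k_r-2}(e))=e_1e_0^{k_r-1}$. Now redo the computation in the proof of Theorem~\ref{ge4} with this seed. Proposition~\ref{Hk} gives $H_k(v)=\mu^k(v)e_1e_0$, which always appends the block $e_1e_0$ at the end, so the seed $e_1e_0^{k_r-1}$ would land at the front of the word rather than the end. This is the main point to get right. Rather than reorder, I would use Lemma~\ref{H} in its general form,
\[
H(\omega,v)=(\omega'\shuffle\mu^2(v))\,be_0,
\]
with $\omega=I(\mu^{k_r-2}(e))=e_1e_0^{k_r-1}$ in the first slot in place of $e$. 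For $k_r\ge 3$ we have $\omega'=e_1e_0^{k_r-3}$ and $b=e_0$, so the shuffle does not collapse to concatenation; the clean collapse happens only when $\omega'$ is empty. So this generalized operator is less clean, and I expect the intended argument to be the simpler one below.

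The simpler route is to use the $\ZZ$-module structure and absorb extra factors of $\mu$. The image $c(\mathcal{K}_{\ZZ})$ is stable under $\mu$ at the level of words: by Proposition~\ref{com}, $c(\mu(G(T)))=L^{10}(I(T)e_0)$. Applying $\mu^{k_r-2}$ to $H_{(k_1',\dots,k_{r-1}')}(e)$ raises the last exponent in the word. The word $e_1e_0^{k_1-1}\cdots e_1e_0$ becomes $e_1e_0^{k_1-1}\cdots e_1e_0^{k_r-1}$, and $\mu$ raises the graph weight by $1$ per application, so the total weight is $k_1+\cdots+k_r$. By Proposition~\ref{com} the value is then
\[
L^{10}(e_1e_0^{k_1-1}\cdots e_1e_0^{k_r-1})=(-1)^r\widetilde\zeta(k_1,\dots,k_r),
\]
and since $c(\mathcal{K}_{\ZZ}^n)$ is a $\ZZ$-module the sign is harmless.

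\emph{Case $r=1$.} The value $\widetilde\zeta(k_1)$ comes directly from the tree $\mu^{k_1-2}(e)$: its word is $e_1e_0^{k_1-1}$ and its weight is $k_1$, so Proposition~\ref{com} gives $-\widetilde\zeta(k_1)\in c(\mathcal{K}_{\ZZ}^{k_1})$.

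The only step needing care is checking that the $\mu^{k_r-2}$ factors, applied last, modify only the final block of the word. This holds because $\mu$ acts on words as right multiplication by $e_0$.
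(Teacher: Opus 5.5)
Your final argument is correct and is the paper's own proof. You apply $\mu^{k_r-2}$ to $H_{(k_1-2,\dots,k_{r-1}-2)}(G(e))$, read off the word $e_1e_0^{k_1-1}\cdots e_1e_0^{k_r-1}$ via Proposition~\ref{com}, and check the weight $2+(k_1+\cdots+k_{r-1})+(k_r-2)$. Your separate case $r=1$ is just the empty-composition instance of the same formula, and the exploratory detour through a modified seed can be dropped entirely.
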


\begin{proof}
By Theorem~\ref{ge4}, for natural numbers $k_r\ge 2$ and $k_1,\dots,k_{r-1}\ge 4$, we have
\[
c\bigl(\mu^{k_r-2}\bigl((H_{k_{r-1}-2}\circ\cdots\circ H_{k_1-2})(G(e))\bigr)\bigr)
= \frac{(-1)^{r}}{(2\pi i)^{k_1+\cdots+k_r}}
  \zeta(k_1,\dots,k_r).
\]
Since
\[
\mu^{k_r-2}\bigl((H_{k_{r-1}-2}\circ\cdots\circ H_{k_1-2})(G(e))\bigr)
\in \mathcal{K}_{\ZZ}^{k_1+\cdots+k_r},
\]
the claim follows.
\end{proof}
 The Kontsevich graph $(H_{k_r} \circ \cdots \circ H_{k_1})(e)$ appearing in
Theorem~\ref{ge4} can be described explicitly.
\begin{ex}The following is the Kontsevich graph $H_{5}(G(e))$ whose integral is $\widetilde{\zeta}(7,2)$.
      	\begin{align*}
			\left(
				\begin{tikzpicture}[baseline=(current bounding box.center)]
					\begin{scope}[decoration={
							markings,
							mark=at position 1 with {\arrow{>}}}
						] 
						\node (U) at (0,0) {$u_1$};
						\node (V) at (0.9,0) {$v_1$};
						\node (A) at (0,-6) {$A$};
						\node (B) at (2.2,-6) {$B$};
						\node (X) at (1.3,0) {$u_2$};
						\node (Y) at (2.2,0) {$v_2$};  
						\node (x_1) at (2.2,-1) {$x_1$};     
						\node (p_2) at (2.2,-2.0) {$x_2$}; 
						\node (x_2) at (2.2,-3) {$x_3$};
						\node (x_3) at (2.2,-4) {$\bullet$}; 
						\node (x_4) at (2.2,-5) {$\bullet$};     
						\draw[postaction=decorate] (U) to[bend left] (V);
						\draw[postaction=decorate] (V) to[bend left] (U);
						\draw[postaction=decorate] (X) to[bend left] (Y);
						\draw[postaction=decorate] (Y) to[bend left] (X);				
						\draw[postaction=decorate] (X) -- (A);				                    \draw[postaction=decorate] (U) -- (A);
						\draw[postaction=decorate] (Y) -- (x_1);                   
						\draw[postaction=decorate] (x_2) -- (x_3);
						\draw[postaction=decorate] (x_3) -- (x_4);
						\draw[postaction=decorate] (x_1) -- (p_2);
                        \draw[postaction=decorate] (p_2) -- (A);
						\draw[postaction=decorate] (p_2) -- (x_2);
						\draw[postaction=decorate] (x_2) -- (A);
						\draw[postaction=decorate] (x_3) -- (A);
						\draw[postaction=decorate] (x_4) -- (B);
						\draw[postaction=decorate] (x_1) -- (A);
						\draw[postaction=decorate] (V) -- (B);
                        \draw[postaction=decorate] (x_4) -- (A);
					\end{scope}
				\end{tikzpicture}
				\right)
                -2\left(
				\begin{tikzpicture}[baseline=(current bounding box.center)]
					\begin{scope}[decoration={
							markings,
							mark=at position 1 with {\arrow{>}}}
						] 
						\node (U) at (0,0) {$u_1$};
						\node (V) at (0.9,0) {$v_1$};
						\node (A) at (0,-6) {$A$};
						\node (B) at (2.2,-6) {$B$};
						\node (X) at (1.3,0) {$u_2$};
						\node (Y) at (2.2,0) {$v_2$};  
						\node (x_1) at (2.2,-1) {$x_1$};     
						\node (p_2) at (2.2,-2.0) {$x_2$}; 
						\node (x_2) at (2.2,-3) {$x_3$};
						\node (x_3) at (2.2,-4) {$\bullet$}; 
						\node (x_4) at (2.2,-5) {$\bullet$};     
						\draw[postaction=decorate] (U) to[bend left] (V);
						\draw[postaction=decorate] (V) to[bend left] (U);
						\draw[postaction=decorate] (X) to[bend left] (Y);
						\draw[postaction=decorate] (Y) to[bend left] (X);				
						\draw[postaction=decorate] (X) -- (A);				                    \draw[postaction=decorate] (U) -- (A);
						\draw[postaction=decorate] (Y) -- (x_1);                   
						\draw[postaction=decorate] (x_2) -- (x_3);
						\draw[postaction=decorate] (x_3) -- (x_4);
						\draw[postaction=decorate] (x_1) -- (p_2);
                        \draw[postaction=decorate] (p_2) -- (A);
						\draw[postaction=decorate] (p_2) -- (x_2);
						\draw[postaction=decorate] (x_2) -- (A);
						\draw[postaction=decorate] (x_3) -- (A);
						\draw[postaction=decorate] (x_4) -- (B);
						\draw[postaction=decorate] (x_1) -- (A);
						\draw[postaction=decorate] (V) -- (x_4);
                        \draw[postaction=decorate] (x_4) -- (A);
					\end{scope}
				\end{tikzpicture}
				\right)
                +\left(
				\begin{tikzpicture}[baseline=(current bounding box.center)]
					\begin{scope}[decoration={
							markings,
							mark=at position 1 with {\arrow{>}}}
						] 
						\node (U) at (0,0) {$u_1$};
						\node (V) at (0.9,0) {$v_1$};
						\node (A) at (0,-6) {$A$};
						\node (B) at (2.2,-6) {$B$};
						\node (X) at (1.3,0) {$u_2$};
						\node (Y) at (2.2,0) {$v_2$};  
						\node (x_1) at (2.2,-1) {$x_1$};     
						\node (p_2) at (2.2,-2.0) {$x_2$}; 
						\node (x_2) at (2.2,-3) {$x_3$};
						\node (x_3) at (2.2,-4) {$\bullet$}; 
						\node (x_4) at (2.2,-5) {$\bullet$};     
						\draw[postaction=decorate] (U) to[bend left] (V);
						\draw[postaction=decorate] (V) to[bend left] (U);
						\draw[postaction=decorate] (X) to[bend left] (Y);
						\draw[postaction=decorate] (Y) to[bend left] (X);				
						\draw[postaction=decorate] (X) -- (A);				                    \draw[postaction=decorate] (U) -- (A);
						\draw[postaction=decorate] (Y) -- (x_1);                   
						\draw[postaction=decorate] (x_2) -- (x_3);
						\draw[postaction=decorate] (x_3) -- (x_4);
						\draw[postaction=decorate] (x_1) -- (p_2);
                        \draw[postaction=decorate] (p_2) -- (A);
						\draw[postaction=decorate] (p_2) -- (x_2);
						\draw[postaction=decorate] (x_2) -- (A);
						\draw[postaction=decorate] (x_3) -- (A);
						\draw[postaction=decorate] (x_4) -- (B);
						\draw[postaction=decorate] (x_1) -- (A);
						\draw[postaction=decorate] (V) -- (x_3);
                        \draw[postaction=decorate] (x_4) -- (A);
					\end{scope}
				\end{tikzpicture}
				\right).
			\end{align*}
\end{ex}
\section{Proof of Theorem 1.2}
In this section, we give a proof of Theorem~1.2. For this purpose, we first recall the definition of Lyndon words and the decomposition of words based on them.
\begin{dfn}[Lyndon word]
We equip the set of all words in the alphabet $\{e_0,e_1\}$ with the lexicographic order induced by $e_1 < e_0$. A word $\omega$ is called a \textbf{Lyndon word} if for every factorization $\omega = x \cdot y$ with $x,y \neq \emptyset$, one has $x < y$.
\end{dfn}

\begin{rem}
Any Lyndon word of length at least $2$ belongs to $\mathcal{S}_{\ZZ}$.
\end{rem}

Any word admits a decomposition in terms of Lyndon words as follows.

\begin{prop}[\cite{VM},{\cite[Proposition 5.6]{Rit}}]\label{Lyshu}
\emph{Let $\omega \in \mathcal{S}_{\ZZ}$ be a word of length $m$. Then $\lfloor \tfrac{m}{2} \rfloor! \cdot \omega$ can be expressed as a $\ZZ$-linear combination of shuffle products of Lyndon words in $\mathcal{S}_{\ZZ}$.}
\end{prop}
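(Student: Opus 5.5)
The plan is to use the classical triangularity of the Lyndon basis for the shuffle algebra, working with divided shuffle powers so that no denominators appear in the induction, and to recover the factor $\lfloor m/2\rfloor!$ only at the end.

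First, the Lyndon factorization. By the Chen--Fox--Lyndon theorem, every nonempty word factors uniquely as $\omega=l_1^{i_1}\cdots l_k^{i_k}$, where $l_1>\cdots>l_k$ are Lyndon words and the exponents denote concatenation powers. If $\omega\in\mathcal{S}_{\ZZ}$ is nonempty, then it starts with $e_1$ and ends with $e_0$. I claim that every $l_j$ then has length at least $2$.
\begin{itemize}
\item If some factor were the single letter $e_1$, the smallest Lyndon word, then every later factor would also be $e_1$. In that case $\omega$ would end in $e_1$, which is impossible.
\item If some factor were the single letter $e_0$, the largest Lyndon word, then every earlier factor would also be $e_0$. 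In that case $\omega$ would start with $e_0$, which is impossible.
\end{itemize}
By the Remark preceding the statement, each $l_j$ therefore lies in $\mathcal{S}_{\ZZ}$. Since $\sum_j i_j|l_j|=m$ and each $|l_j|\ge 2$, we also get $\sum_j i_j\le\lfloor m/2\rfloor$.

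Second, divided powers. For a word $l$, the $i$-fold shuffle power $l^{\shuffle i}$ is the sum over interleavings of $i$ labelled copies of $l$. The symmetric group $\mathfrak{S}_i$ acts freely on these interleavings by permuting the labels, and the resulting word is unchanged. Hence every coefficient of $l^{\shuffle i}$ is divisible by $i!$, and $l^{[i]}:=l^{\shuffle i}/i!$ lies in $\ZZ\langle e_0,e_1\rangle$. I would then establish the triangularity statement
\[
P_\omega:=l_1^{[i_1]}\shuffle\cdots\shuffle l_k^{[i_k]}=\omega+\sum_{u<\omega,\ |u|=m}c_u\,u,\qquad c_u\in\ZZ,
\]
in the lexicographic order with $e_1<e_0$. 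This is the standard Radford/Reutenauer fact that the lexicographically largest word in the shuffle of Lyndon words listed in nonincreasing order is their concatenation. Dividing by $\prod_j i_j!$ normalizes its coefficient to $1$. Every word $u$ occurring in $P_\omega$ is an interleaving of words that start with $e_1$ and end with $e_0$, so $u\in\mathcal{S}_{\ZZ}$ as well.

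Third, the induction. I would do descending induction on the finitely many words of length $m$ in $\mathcal{S}_{\ZZ}$, ordered lexicographically. The inductive claim is that each such word is a $\ZZ$-linear combination of products $l_1^{[i_1]}\shuffle\cdots\shuffle l_k^{[i_k]}$ of Lyndon words in $\mathcal{S}_{\ZZ}$ with $\sum_j i_j\le\lfloor m/2\rfloor$. This follows from $\omega=P_\omega-\sum_u c_u u$ together with the hypothesis applied to each $u$; the first step applied to $u$ gives the bound on its exponents.

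Finally, multiply through by $\lfloor m/2\rfloor!$. Each term becomes
\[
\frac{\lfloor m/2\rfloor!}{i_1!\cdots i_k!}\;l_1^{\shuffle i_1}\shuffle\cdots\shuffle l_k^{\shuffle i_k}.
\]
The prefactor is an integer, because $\sum_j i_j\le\lfloor m/2\rfloor$ and multinomial coefficients are integers. This gives the claim.

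The main obstacle is the leading-term statement in the second step, specifically pinning down that $\omega$ is the maximal word in the shuffle with coefficient exactly $\prod_j i_j!$ under the order $e_1<e_0$. I would cite it from Reutenauer's book, adapting the alphabet order. The divided-power device is what prevents the denominators $\prod_j i_j!$ from accumulating through the induction.
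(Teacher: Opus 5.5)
The paper does not prove this proposition itself; it is quoted from \cite{VM} and \cite[Proposition 5.6]{Rit}, so there is no in-paper argument to compare against. Your proposal is a correct, self-contained proof along the standard lines: Chen--Fox--Lyndon factorization combined with Radford's triangularity theorem, as in Reutenauer.

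Your route also makes clear where the constant $\lfloor m/2\rfloor!$ comes from. Membership $\omega\in\mathcal{S}_{\ZZ}$ forces every Lyndon factor to have length at least $2$, so there are at most $\lfloor m/2\rfloor$ factors. The only denominators that ever arise are the $\prod_j i_j!$ attached to repeated factors. In fact you prove a sharper statement: $\omega$ itself is a $\ZZ$-linear combination of the divided-power products $l_1^{[i_1]}\shuffle\cdots\shuffle l_k^{[i_k]}$. The divided powers are exactly what make the triangular system unitriangular over $\ZZ$, so inverting it introduces no denominators and nothing compounds through the induction.

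Two small points should be tidied.
\begin{enumerate}
\item Your induction applies the hypothesis to words $u<\omega$. It is therefore an upward (strong) induction starting from the lexicographically smallest word of length $m$ in $\mathcal{S}_{\ZZ}$, not a descending one. The logic you describe is right; only the word ``descending'' is reversed.
\item You import Chen--Fox--Lyndon and Radford from the literature, so you should note that the paper's definition of a Lyndon word is equivalent to the usual one. The paper requires $x<y$ for every factorization $\omega=xy$; the usual definition requires $\omega$ to be strictly smaller than each of its proper suffixes. This is a short check: take the smallest proper suffix and split into cases according to whether it is a prefix of $\omega$. You should also say explicitly that the order $e_1<e_0$ is handled by renaming the letters.
\end{enumerate}
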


Moreover, the following lemma holds for Lyndon words.

\begin{lem}[{\cite[Proposition 5.7]{Rit}}]\label{L3}
\emph{Any Lyndon word of length at least $3$ either ends with $e_0 e_0$ or begins with $e_1 e_1$.}
\end{lem}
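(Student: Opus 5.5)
We argue directly from the standard characterization of Lyndon words: a nonempty word $\omega$ is Lyndon if and only if $\omega$ is strictly smaller than each of its proper nonempty suffixes, in the lexicographic order with $e_1<e_0$, where a proper prefix is smaller than the word it is a prefix of. We assume the definition above is meant in this standard sense and do not reprove the equivalence. The plan has three short steps.

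\textbf{Step 1: the first letter is $e_1$.} Let $\omega$ be Lyndon of length $n\ge 3$. Suppose $\omega$ begins with $e_0$. If $\omega$ contains some $e_1$, then the suffix starting at that letter begins with $e_1$. This suffix is proper, since it does not start at position $1$, and it is smaller than $\omega$, which is a contradiction. Otherwise $\omega=e_0^n$. Then the suffix $e_0$ is a proper prefix of $\omega$, hence smaller than $\omega$, again a contradiction. So $\omega$ begins with $e_1$.

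\textbf{Step 2: the last letter is $e_0$.} If $\omega$ ended with $e_1$, the one-letter suffix $e_1$ would be a proper prefix of $\omega$ by Step 1. It would therefore be smaller than $\omega$, contradicting the Lyndon property. Hence $\omega$ ends with $e_0$. This step also recovers the preceding Remark that Lyndon words of length at least $2$ lie in $\mathcal{S}_{\ZZ}$.

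\textbf{Step 3: case split on the second letter.} If the second letter is $e_1$, then $\omega$ begins with $e_1e_1$ and we are done. Otherwise $\omega=e_1e_0u$, and by Step 2 the last letter is $e_0$. If the second-to-last letter is $e_0$, then $\omega$ ends with $e_0e_0$ and we are done.

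It remains to rule out that $\omega$ ends with $e_1e_0$. When $n=3$ this would force the second letter to be $e_1$, contrary to the case assumption, so we may take $n\ge4$. Then $e_1e_0$ is a proper suffix of $\omega$. It is also a proper prefix of $\omega=e_1e_0u$, so $e_1e_0<\omega$, contradicting the Lyndon property.

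\textbf{Main obstacle.} The only delicate point is the convention. The definition stated in the paper ("$x<y$ for every factorization $\omega=xy$") must be read in the standard way, together with the convention that a proper prefix is smaller. I would state this reading explicitly and cite \cite{VM} for the equivalence with the suffix characterization. The remaining steps are elementary.
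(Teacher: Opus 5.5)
Your proof is correct. The paper gives no argument of its own for this lemma; it only quotes Ritland's Proposition~5.7. Your three steps are therefore a complete, self-contained verification: the first letter is $e_1$, the last letter is $e_0$, and when the second letter is $e_0$ the ending $e_1e_0$ is excluded. This is the natural elementary argument.

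You can also drop the appeal to the suffix characterization and the cited equivalence. Each contradiction you derive is already witnessed by an explicit factorization $\omega=xy$ with $y\le x$, which violates the paper's stated definition ($x<y$) directly:
\begin{itemize}
\item In Step~1, take $y$ to be the suffix starting at the first $e_1$. Then $y<x$, because $x$ begins with $e_0$. If there is no $e_1$, take $x=e_0^{n-1}$ and $y=e_0$.
\item In Step~2, take $y=e_1$, which is a prefix of $x$ because $x$ begins with $e_1$.
\item In Step~3, take $y=e_1e_0$ and let $x$ be the first $n-2$ letters. Since $n-2\ge 2$, $x$ begins with $e_1e_0$, so $y$ equals $x$ (when $n=4$) or is a proper prefix of it.
\end{itemize}
Working this way, the only convention you need is the standard one that a proper prefix is smaller than the word.
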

        By Proposition~\ref{Lyshu} and Lemma~\ref{L3}, we obtain the following proposition.

\begin{prop}\label{nijoudeosaeru}
\emph{For any word $\omega \in \mathcal{S}_{\ZZ}^m$ of length $m$, one has
\[
\left\lfloor \frac{m(m+1)}{4} \right\rfloor! \cdot \omega \in I(\mathcal{T}_{\ZZ}^m).
\]}
\end{prop}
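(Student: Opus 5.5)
We prove the statement by strong induction on the length $m$. The bookkeeping identity is
\[
\left\lfloor \tfrac{m(m+1)}{4}\right\rfloor = \left\lfloor \tfrac{m}{2}\right\rfloor\text{-type factor}\;+\;\text{(factors inherited from shorter words)},
\]
and we will not need it in exact form: it suffices to show that the integer needed to put $\omega$ into $I(\mathcal{T}_{\ZZ}^m)$ divides $\lfloor m(m+1)/4\rfloor!$. For small $m$ the claim is direct. For $m=0$ we have $\mathcal{T}^0_{\ZZ}=\ZZ$, and $m=1$ is vacuous. For $m=2$ the only word is $e_1e_0=I(e)$.

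\textbf{Step 1 (reduce to Lyndon words).} By Proposition~\ref{Lyshu}, $\lfloor m/2\rfloor!\cdot\omega$ is a $\ZZ$-combination of shuffle products $\ell_1\shuffle\cdots\shuffle\ell_s$ of Lyndon words in $\mathcal{S}_{\ZZ}$ with $\sum|\ell_i|=m$. Since $I$ is a $\ZZ$-algebra homomorphism (the product $\cdot$ goes to $\shuffle$), it suffices to put a suitable multiple of each Lyndon factor $\ell_i$ into $I(\mathcal{T}_{\ZZ}^{|\ell_i|})$ and then multiply.

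\textbf{Step 2 (Lyndon words).} Let $\ell$ be Lyndon of length $n\ge 3$. By Lemma~\ref{L3}, either $\ell=\ell'e_0e_0=\mu(\ell'e_0)$ or $\ell=e_1e_1\ell''=\nu(e_1\ell'')$. In both cases $\ell$ is $\mu$ or $\nu$ applied to a word of length $n-1$ lying in $\mathcal{S}_{\ZZ}$. Since $I$ is an $\mathcal{F}$-module map, the induction hypothesis gives
\[
\lfloor (n-1)n/4\rfloor!\cdot\ell\in I(\mathcal{T}_{\ZZ}^n).
\]
For $n=2$ we have $\ell=e_1e_0=I(e)$ with constant $1$.

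\textbf{Step 3 (combine constants).} Each shuffle term has factors of lengths $n_i$ with $\sum n_i=m$. When $s\ge2$, every $n_i\le m-2$, because each Lyndon factor in $\mathcal{S}_{\ZZ}$ has length at least $2$. The product $\prod_i \lfloor (n_i-1)n_i/4\rfloor!$ divides $\bigl(\sum_i\lfloor (n_i-1)n_i/4\rfloor\bigr)!$ because multinomial coefficients are integers. The main obstacle is then the arithmetic inequality
\[
\left\lfloor\tfrac m2\right\rfloor+\left\lfloor\tfrac{(m-1)m}{4}\right\rfloor\le\left\lfloor\tfrac{m(m+1)}4\right\rfloor,
\]
together with
\[
\sum_i\left\lfloor\tfrac{(n_i-1)n_i}{4}\right\rfloor\le\left\lfloor\tfrac{(m-1)m}{4}\right\rfloor,
\]
which holds by superadditivity of $x(x-1)$. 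Given these, we use that $a!\,b!\mid(a+b)!$ and that $a!\mid c!$ for $a\le c$, so the factor $\lfloor m/2\rfloor!\cdot(\text{Lyndon constants})$ divides $\lfloor m(m+1)/4\rfloor!$. To verify the first inequality, write $\tfrac{m(m+1)}4-\tfrac{(m-1)m}4=\tfrac m2$ and check the floors by considering $m \bmod 4$. I expect this case check to be routine but fiddly. The term $s=1$, where $\omega$ itself is Lyndon, is handled directly by Step 2 with constant $\lfloor (m-1)m/4\rfloor!$, which divides the target.
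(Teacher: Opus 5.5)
Your proposal is correct and follows the paper's argument: strong induction on length, the Lyndon decomposition of $\lfloor m/2\rfloor!\,\omega$, Lemma~\ref{L3} to write each Lyndon factor of length at least $3$ as $\mu$ or $\nu$ of a shorter word in $\mathcal{S}_{\ZZ}$, and the inequalities $\sum_i\lfloor n_i(n_i-1)/4\rfloor\le\lfloor m(m-1)/4\rfloor$ and $\lfloor m(m-1)/4\rfloor+\lfloor m/2\rfloor\le\lfloor m(m+1)/4\rfloor$. The differences are cosmetic. You peel $\mu$ or $\nu$ off every long Lyndon factor, whereas the paper peels it off one factor and applies the induction hypothesis directly to the others. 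Your ``fiddly'' mod-$4$ check is also unnecessary, since it follows at once from $\lfloor a\rfloor+\lfloor b\rfloor\le\lfloor a+b\rfloor$.
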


\begin{proof}
We proof by induction on the length of $\omega$. The case $|\omega|=2$ is clear. Assume $|\omega|=m>2$.

By Proposition~\ref{Lyshu}, the decomposition of $\omega$ into Lyndon words can be written as
\[
\left\lfloor \frac{m}{2} \right\rfloor! \cdot \omega
= \sum_i c_i \bigl(l_{i_1} \shuffle \cdots \shuffle l_{i_{r_i}}\bigr)
\qquad (c_i \in \ZZ),
\]
where $n_j := |l_{i_j}| \ge 2$ and $m = \sum_{1 \le j \le r_i} n_j$.

If $n_j = 2$ for all $j$, then $l_{i_j} = e_1 e_0$ and hence
\[
l_{i_1} \shuffle \cdots \shuffle l_{i_{r_i}}
= e_1 e_0 \shuffle \cdots \shuffle e_1 e_0
\in I(\mathcal{T}_{\ZZ}^m).
\]

Suppose that there exists $j$ such that $n_j > 2$. By Lemma~\ref{L3}, we can write
\[
l_{i_j} = \mu(l_{i_j}') \quad \text{or} \quad l_{i_j} = \nu(l_{i_j}'),
\]
where $|l_{i_j}'| = n_j - 1 < m$. By the induction hypothesis,
\[
\left\lfloor \frac{n_j(n_j - 1)}{4} \right\rfloor! \cdot l_{i_j}'
\in I(\mathcal{T}_{\ZZ}^{n_j - 1}),
\]
and hence
\[
\left\lfloor \frac{n_j(n_j - 1)}{4} \right\rfloor! \cdot l_{i_j}
\in I(\mathcal{T}_{\ZZ}^{n_j}).
\]

Moreover, by the induction hypothesis, for any $1 \le k \le r_i$,
\[
\left\lfloor \frac{n_k(n_k + 1)}{4} \right\rfloor! \cdot l_{i_k}
\in I(\mathcal{T}_{\ZZ}^{n_k}).
\]
Therefore,
\[
\left( \prod_{k \ne j} \left\lfloor \frac{n_k(n_k + 1)}{4} \right\rfloor! \right)
\cdot
\left\lfloor \frac{n_j(n_j - 1)}{4} \right\rfloor!
\cdot
\bigl(l_{i_1} \shuffle \cdots \shuffle l_{i_{r_i}}\bigr)
\in I(\mathcal{T}_{\ZZ}^m).
\]

Since $\sum_{k \ne j} n_k + (n_j - 1) \le m - 1$, we have
\[
\sum_{k \ne j} \left\lfloor \frac{n_k(n_k + 1)}{4} \right\rfloor
+
\left\lfloor \frac{n_j(n_j - 1)}{4} \right\rfloor
\le
\left\lfloor \frac{m(m - 1)}{4} \right\rfloor.
\]
It follows that
\[
\left\lfloor \frac{m(m - 1)}{4} \right\rfloor!
\cdot
\bigl(l_{i_1} \shuffle \cdots \shuffle l_{i_{r_i}}\bigr)
\in I(\mathcal{T}_{\ZZ}^m).
\]

Consequently,
\[
\left\lfloor \frac{m(m - 1)}{4} \right\rfloor!
\cdot
\left\lfloor \frac{m}{2} \right\rfloor!
\cdot \omega
\in I(\mathcal{T}_{\ZZ}^m),
\]
which implies in particular that
\[
\left\lfloor \frac{m(m + 1)}{4} \right\rfloor!
\cdot \omega
\in I(\mathcal{T}_{\ZZ}^m).
\]
\end{proof}
As an immediate consequence of the above proposition, we obtain the following corollary.

\begin{cor}[{\cite{Rit}}]\label{qdeseisei}
\emph{For any $n \ge 0$, the map
\[
I : \mathcal{T}_{\QQ}^n \to \mathcal{S}_{\QQ}^n
\]
is surjective.}
\end{cor}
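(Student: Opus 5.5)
The surjectivity over $\QQ$ follows directly from Proposition~\ref{nijoudeosaeru}, once we deal separately with the small lengths and with extending scalars.

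First, the degenerate cases. For $n=0$ both sides are $\QQ$, since $\mathcal{T}^0=\ZZ$ and $\mathcal{S}^0=\ZZ$ is spanned by the empty word, and $I(1)=1$. For $n=1$ both sides are zero, because $\mathcal{T}_{\ZZ}^1=\{0\}$ and $\mathcal{S}_{\ZZ}$ contains no word of length $1$: every nonempty word in $\mathcal{S}_{\ZZ}$ has the form $e_1\cdots e_0$, so it has length at least $2$.

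Now let $n\ge 2$. The module $\mathcal{S}_{\QQ}^n:=\mathcal{S}_{\ZZ}^n\otimes\QQ$ is spanned over $\QQ$ by words $\omega$ of length $n$ lying in $e_1\ZZ\langle e_0,e_1\rangle e_0$. Since $I$ is $\ZZ$-linear, it extends $\QQ$-linearly to $I:\mathcal{T}_{\QQ}^n\to\mathcal{S}_{\QQ}^n$. This map preserves weight: $I(e)=e_1e_0$ has length $2$, $\mu$ and $\nu$ each add one letter, and the shuffle product adds lengths. These match the weight conventions $2\#e+\#\mu+\#\nu$ on trees. So it suffices to show that each such word $\omega$ lies in the image.

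Apply Proposition~\ref{nijoudeosaeru} with $m=n$. This gives some $T\in\mathcal{T}_{\ZZ}^n$ with $I(T)=N\cdot\omega$, where $N=\lfloor n(n+1)/4\rfloor!$ is a positive integer. Dividing by $N$ in $\mathcal{T}_{\QQ}^n$, we get $I(N^{-1}T)=\omega$. By linearity, every element of $\mathcal{S}_{\QQ}^n$ is therefore in the image.

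There is no real obstacle here, since all the work is in Proposition~\ref{nijoudeosaeru}. The only point needing care is the weight bookkeeping just described: $I$ really maps $\mathcal{T}^n$ into $\mathcal{S}^n$, and the cases $n=0,1$ are consistent with the conventions $\mathcal{T}^0_{\ZZ}=\ZZ$ and $\mathcal{T}^1_{\ZZ}=\{0\}$.
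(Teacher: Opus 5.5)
Your proposal is correct and takes the same approach as the paper. The paper also presents this corollary as an immediate consequence of Proposition~\ref{nijoudeosaeru}, clearing the integer factor $\lfloor n(n+1)/4\rfloor!$ over $\QQ$. Your separate treatment of $n=0,1$ and the weight bookkeeping for $I$ just make explicit what the paper leaves implicit.
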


By Corollary~\ref{qdeseisei} together with Proposition~\ref{com}, Theorem~\ref{wg} follows. Moreover, the following lemma holds for the image of $c$.

\begin{lem}\label{1/2}
\emph{Let $n \ge 1$ be an integer. For $z \in \widetilde{\mathcal{Z}}^{\flat}_{\ZZ}$, if $z \in c(\mathcal{K}_{\ZZ}^n)$, then
\[
\frac{1}{2}z \in c(\mathcal{K}_{\ZZ}^{n+1}).
\]
In particular, one has $c(\mathcal{K}_{\ZZ}^n) \subset c(\mathcal{K}_{\ZZ}^{n+1})$.}
\end{lem}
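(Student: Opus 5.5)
The plan is to realise multiplication by $\tfrac12$ as the join with one fixed Kontsevich graph of weight $1$. Let $\Gamma_1$ be the graph with a single internal vertex $x$ and the two edges $x\to A$, $x\to B$, with either ordering. Since $c$ is multiplicative for the join $*$ (Section~3), for any $\Gamma\in\mathcal{K}_{\ZZ}^n$ we have
\[
\Gamma*\Gamma_1\in\mathcal{K}_{\ZZ}^{n+1},\qquad c(\Gamma*\Gamma_1)=c(\Gamma)\,c(\Gamma_1).
\]
So the statement reduces to showing that $c(\Gamma_1)=\pm\tfrac12$.

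To compute $c(\Gamma_1)$, use the $\mathrm{Aff}_+$ action to fix $x_1=0$ and $x_2=1$, so that $\mathfrak{C}_{1,2}\cong\mathbb{H}$ with coordinate $z$. For real $a$, the quantity $\frac{z-a}{\overline{z}-a}$ has modulus $1$ and argument $2\arg(z-a)$. Hence
\[
\alpha_{x\to A}=\tfrac{1}{\pi}\,d\theta_0,\qquad \alpha_{x\to B}=\tfrac{1}{\pi}\,d\theta_1,
\]
where $\theta_0=\arg z$ and $\theta_1=\arg(z-1)$. The map $z\mapsto(\theta_0,\theta_1)$ is a diffeomorphism from $\mathbb{H}$ onto the triangle $\{0<\theta_0<\theta_1<\pi\}$: the point $z$ is recovered by intersecting two rays, one from $0$ and one from $1$. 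This triangle has area $\pi^2/2$. Therefore
\[
c(\Gamma_1)=\pm\frac{1}{\pi^2}\cdot\frac{\pi^2}{2}=\pm\frac12,
\]
the sign depending on the orientation and the edge ordering. Transposing the two edges of $\Gamma_1$ flips the sign via the relation $(\Gamma,\varepsilon)=\sgn(\sigma)(\Gamma,\sigma\circ\varepsilon)$. So we can choose the ordering, or simply take $-\Gamma_1$ in $\mathcal{K}_{\ZZ}^1$, so that $c(\Gamma_1)=\tfrac12$.

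Now let $z\in c(\mathcal{K}_{\ZZ}^n)$, say $z=c(X)$ with $X=\sum a_i\Gamma^{(i)}$. By bilinearity of $*$ and multiplicativity of $c$,
\[
c(X*\Gamma_1)=\tfrac12 z,\qquad X*\Gamma_1\in\mathcal{K}_{\ZZ}^{n+1}.
\]
For the inclusion, since $c$ is $\ZZ$-linear, $z=2\cdot\tfrac12 z=c\bigl(2\,X*\Gamma_1\bigr)\in c(\mathcal{K}_{\ZZ}^{n+1})$.

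The only real obstacle is the computation of $c(\Gamma_1)$, in particular confirming that $z\mapsto(\theta_0,\theta_1)$ is a bijection onto the triangle and that the absolute value is exactly $\tfrac12$. This is elementary, and the sign issue is absorbed by re-ordering the edges.
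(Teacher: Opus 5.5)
Your proof is correct and follows the paper's argument: you join with the weight-one graph $x\to A$, $x\to B$ (the paper's $\Gamma_w$), use multiplicativity of $c$ under $*$, and fix the sign by negating or reordering the edges. The only difference is that you compute $|c(\Gamma_w)|=\tfrac12$ directly from the angle coordinates $(\theta_0,\theta_1)$ on $\mathfrak{C}_{1,2}\cong\mathbb{H}$, whereas the paper quotes $c(\Gamma_w)=-\tfrac12$ from \cite[Lemma 5.3]{BPP}.
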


\begin{proof}
For $z \in \widetilde{\mathcal{Z}}^{\flat}_{\ZZ}$, suppose that there exists $\Gamma \in \mathcal{K}_{\ZZ}^n$ such that $c(\Gamma) = z$.
Consider the Kontsevich graph of weight $1$
\[
\Gamma_{w}:=\begin{tikzpicture}[baseline=(current bounding box.center)]
\begin{scope}[decoration={
markings,
mark=at position 1 with {\arrow{>}}}
] 
\node (X) at (0,0) {$x$};
\node (U) at (-0.5,-1) {$A$};
\node (V) at (0.5,-1) {$B$};
\draw[postaction=decorate] (X) -- (U);
\draw[postaction=decorate] (X) -- (V);
\end{scope}
\end{tikzpicture}
\]
It is shown that
\[
c(\Gamma_w)
= \int_x \alpha_{\Gamma_w}
= \int_x \alpha_{x \to A} \wedge \alpha_{x \to B}
= -\frac{1}{2},
\]
(see {\cite[Lemma 5.3]{BPP}}).
Since $\Gamma * \Gamma_w \in \mathcal{K}_{\ZZ}^{n+1}$, it follows that
\[
-c(\Gamma * \Gamma_w) = \frac{1}{2}z \in c(\mathcal{K}_{\ZZ}^{n+1}).
\]
The inclusion $c(\mathcal{K}_{\ZZ}^n) \subset c(\mathcal{K}_{\ZZ}^{n+1})$ follows as well.
\end{proof}

Now, by Corollary~\ref{mzge4}, we have
\[
\widetilde{\zeta}(\{4\}_n) := \widetilde{\zeta}(4,\dots,4) \in c(\mathcal{K}_{\ZZ}^{4n}).
\]
For $\widetilde{\zeta}(\{4\}_n)$, the following explicit formula is known.

\begin{prop}[{\cite{BBB}}]\label{444}
\emph{For any integer $n \ge 1$, one has
\[
\widetilde{\zeta}(\{4\}_n) = \frac{1}{2^{2n-1}(4n+2)!}.
\]}
\end{prop}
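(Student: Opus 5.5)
The plan is to first establish the classical evaluation
\[
\zeta(\{4\}_n)=\frac{2^{2n+1}\pi^{4n}}{(4n+2)!},
\]
and then normalize. Since $(2\pi i)^{4n}=2^{4n}\pi^{4n}$ (because $i^{4n}=1$), dividing by it gives
\[
\widetilde\zeta(\{4\}_n)=\frac{2^{2n+1}}{2^{4n}(4n+2)!}=\frac{1}{2^{2n-1}(4n+2)!},
\]
which is the claim. So all the work lies in the unnormalized formula, and we prove it through a generating function.

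Expanding the product over $m\ge1$ and collecting strictly increasing index tuples, we get the formal identity
\[
\sum_{n\ge0}\zeta(\{4\}_n)\,x^{4n}=\prod_{m\ge1}\Bigl(1+\frac{x^4}{m^4}\Bigr).
\]
Both sides converge absolutely for every complex $x$, so this is an identity of entire functions. Next we factor $1+x^4/m^4=(1-a^2x^2/m^2)(1-\bar a^2x^2/m^2)$ with $a=e^{i\pi/4}$. This works because $a^2=i$, $\bar a^2=-i$, and $a^2\bar a^2=1$. Applying Euler's product $\prod_{m\ge1}(1-z^2/m^2)=\sin(\pi z)/(\pi z)$ to each factor gives
\[
\prod_{m\ge1}\Bigl(1+\frac{x^4}{m^4}\Bigr)=\frac{\sin(\pi a x)\sin(\pi\bar a x)}{\pi^2 x^2}.
\]

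We then use $\sin u\sin v=\tfrac12(\cos(u-v)-\cos(u+v))$. Here $u-v=\pi x(a-\bar a)=i\sqrt2\,\pi x$ and $u+v=\sqrt2\,\pi x$, so the right-hand side equals
\[
\frac{\cosh(\sqrt2\pi x)-\cos(\sqrt2\pi x)}{2\pi^2x^2}.
\]
In the Taylor expansion of $\cosh y-\cos y$, the terms $y^{4n}$ cancel and the terms $y^{4n+2}$ double, so $\cosh y-\cos y=\sum_{n\ge0}2y^{4n+2}/(4n+2)!$. Substituting $y=\sqrt2\pi x$ gives $y^{4n+2}=2^{2n+1}\pi^{4n+2}x^{4n+2}$. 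Dividing by $2\pi^2x^2$ yields $\sum_n 2^{2n+1}\pi^{4n}x^{4n}/(4n+2)!$. Comparing coefficients of $x^{4n}$ gives the formula for $\zeta(\{4\}_n)$.

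The only step that needs care is the analytic justification: rearranging the product into the series, and using the product formula for sine with complex arguments. Both follow from absolute convergence of $\sum_m |x|^4/m^4$, so the main risk is just sign and power-of-two bookkeeping, which the computation above keeps track of.
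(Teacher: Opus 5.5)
Your proof is correct, and every constant checks out; for example, $n=1$ gives $\widetilde\zeta(4)=\zeta(4)/(2\pi)^4=1/1440=1/(2\cdot 6!)$. The paper gives no argument of its own here: it quotes the evaluation $\zeta(\{4\}_n)=2^{2n+1}\pi^{4n}/(4n+2)!$ from the cited source and divides by $(2\pi i)^{4n}=2^{4n}\pi^{4n}$. Your derivation of that formula from $\prod_{m\ge1}\bigl(1+x^4/m^4\bigr)=\bigl(\cosh(\sqrt2\pi x)-\cos(\sqrt2\pi x)\bigr)/(2\pi^2x^2)$ is the classical one, so you have supplied the omitted details rather than taken a different route.
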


From Proposition~\ref{nijoudeosaeru}, Lemma~\ref{1/2}, and Proposition~\ref{444}, we obtain the following Theorem 1.2.
\begin{proof}[Proof of Theorem 1.2]
First, we show that for every $m\ge 2$,
\begin{equation}\label{tildegafukumareru}
  \widetilde{\mathcal{Z}}_{\ZZ}^m \subset c(\mathcal{K}_{\ZZ}^{m+4N(m)}).
\end{equation}

By Proposition~\ref{444}, for every $n\ge 1$,
\[
\frac{1}{(4n+2)!} \in c(\mathcal{K}_{\ZZ}^{4n}).
\]
Moreover, by Propositions~\ref{com} and~\ref{nijoudeosaeru}, for every word
\[
\omega = e_1 e_0^{k_1-1} \cdots e_1 e_0^{k_r-1} \in \mathcal{S}_{\ZZ}
\]
of length $m\ge2$, we have
\[
\left\lfloor \frac{m(m+1)}{4} \right\rfloor!
\cdot \widetilde{\zeta}(k_1,\dots,k_r)
\in L^{10}\bigl(I(\mathcal{T}_{\ZZ}^m)\bigr)
\subset c(\mathcal{K}_{\ZZ}^m).
\]
Therefore, if $n$ is chosen so that
\[
\left\lfloor \frac{m(m+1)}{4} \right\rfloor \le 4n+2,
\]
then
\[
\widetilde{\zeta}(k_1,\dots,k_r)
\in c(\mathcal{K}_{\ZZ}^{m+4n}).
\]

Recall that
\[
N(m):=
\begin{cases}
\left\lceil \dfrac{m^2+m-8}{16} \right\rceil
& \text{if $m \equiv 0,3 \pmod{4}$,} \\[6pt]
\left\lceil \dfrac{m^2+m-10}{16} \right\rceil
& \text{if $m \equiv 1,2 \pmod{4}$,}
\end{cases}
\]
and that
\[
\left\lfloor \frac{m(m+1)}{4} \right\rfloor=
\begin{cases}
\dfrac{m^2+m}{4}
& \text{if $m \equiv 0,3 \pmod{4}$,} \\[6pt]
\dfrac{m^2+m-2}{4}
& \text{if $m \equiv 1,2 \pmod{4}$.}
\end{cases}
\]
Hence,
\[
\left\lfloor \frac{m(m+1)}{4} \right\rfloor \le 4N(m)+2.
\]
This proves (\ref{tildegafukumareru}).

Next, we prove that
\begin{equation}\label{1/2gafuku}
  \frac{1}{2}\widetilde{\mathcal{Z}}_{\ZZ}^{m-1}
  \subset c(\mathcal{K}_{\ZZ}^{m+4N(m)}).
\end{equation}

Since $\widetilde{\mathcal{Z}}_{\ZZ}^{1}=\{0\}$, the claim is obvious when $m=2$.
Assume that $m>2$. Since (\ref{tildegafukumareru}) holds with $m$ replaced by $m-1$, Lemma~\ref{1/2} implies that
\[
\frac{1}{2}\widetilde{\mathcal{Z}}_{\ZZ}^{m-1}
\subset c(\mathcal{K}_{\ZZ}^{m+4N(m-1)}).
\]

Furthermore, for $m>2$,
\[
(m-1)^2 + (m-1) - 8
=
m^2-m-8
<
m^2+m-10.
\]
Therefore, if $m\equiv1\pmod4$, then $N(m)\ge N(m-1)$.
For $m\equiv0,2,3\pmod4$, it follows immediately from the definition that
$N(m)\ge N(m-1)$.
Hence, $N(m)\ge N(m-1)$ for all $m>2$.
Therefore, by Lemma~\ref{1/2},
\[
c(\mathcal{K}_{\ZZ}^{m+4N(m-1)})
\subset
c(\mathcal{K}_{\ZZ}^{m+4N(m)}),
\]
and (\ref{1/2gafuku}) follows.
\end{proof}
\section*{Acknowledgments}
The author would like to express sincere gratitude to Professor Hidekazu Furusho for his valuable comments and helpful suggestions in the preparation of this paper. The author is also deeply grateful to Mr. Kelvin Ritland for carefully reading an earlier version of this manuscript and providing valuable advice.

\end{document}